\newtheorem{theorem}{Theorem}
\newtheorem{definition}[theorem]{Definition}
\newtheorem{lemma}[theorem]{Lemma}
\newtheorem{proposition}[theorem]{Proposition}
\newtheorem{remark}[theorem]{Remark}
\newenvironment{proof}[1][Proof]{\textit{#1.} }{\  \rule{0.5em}{0.5em}}
\begin{document}

\title{On the existence of solutions to the \\planar exterior Navier Stokes system}
\author{Matthieu Hillairet\\{\small Universit\'{e} Paris Dauphine }\\{\small Place du Mar\'{e}chal De Lattre De Tassigny}\\{\small 75775 Paris Cedex 16 - France}\\{\small hillairet@ceremade.dauphine.fr}
\and Peter Wittwer\thanks{Work supported in part by the Swiss National Science
Foundation.}\\{\small University of Geneva}\\{\small 24, Quai Ernest Ansermet}\\{\small 1205 Geneva - Switzerland}\\{\small \ peter.wittwer@unige.ch }}
\date{\today }
\maketitle

\begin{abstract}
We consider the stationary incompressible Navier Stokes equation in the
exterior of a disk $B\subset \mathbb{R}^{2}$ with non-zero Dirichlet boundary
conditions on the disk and zero boundary conditions at infinity. We prove the
existence of solutions for an open set of boundary conditions without symmetry.

\end{abstract}
\tableofcontents

\section{Introduction}

In this paper we consider the incompressible Navier Stokes equations in an
exterior domain:
\begin{equation}
\left \{
\begin{array}
[c]{r}%
\Delta \mathbf{u}-\nabla p=\mathbf{u}\cdot \nabla \mathbf{u}\text{ },\\
\operatorname{div}\mathbf{u}=0\text{ },
\end{array}
\right.  \text{ in $\mathbb{R}^{2}\setminus \overline{B}$ }, \label{NS}%
\end{equation}
with $B$ a smooth bounded domain, with non-zero Dirichlet boundary conditions
on $\partial B$, and zero boundary conditions at infinity:%
\begin{equation}
\mathbf{u}_{|_{\partial B}}=\mathbf{u}^{\ast}\text{ },\quad \lim_{|\mathbf{x}%
|\rightarrow \infty}\mathbf{u}(\mathbf{x})=0\text{ }. \label{BC}%
\end{equation}
Of particular interest is the case of boundary data $\mathbf{u}^{\ast}$ with
zero flux:
\begin{equation}
\int_{\partial B}\mathbf{u}^{\ast}\cdot \mathbf{n}\text{ }\mathrm{d}%
\sigma=0\text{ }. \label{fluxnul}%
\end{equation}
We note that, since the size of $B$ is arbitrary, we have set without
restriction of generality all the physical constants in (\ref{NS}) equal to one.

The above system is a special case of the exterior Navier Stokes problem:
\begin{equation}
\left \{
\begin{array}
[c]{r}%
-\left(  \mathbf{u}\cdot \nabla \right)  \mathbf{u}-\lambda \partial
_{1}\mathbf{u}+\Delta \mathbf{u}-\nabla p=0\text{ },\\
\nabla \cdot \mathbf{u}=0\text{ },
\end{array}
\right.  \text{ in }\mathbb{R}^{n}\setminus \overline{B}\text{ },
\label{eq_NSE3}%
\end{equation}
with $n=2$ or $3$, with $B$ a smooth bounded domain in $\mathbb{R}^{n}$, with
boundary conditions (\ref{BC}), and with $\lambda \in \{0,1\}$ distinguishing
between the case of a flow \textquotedblleft around\textquotedblright \ $B$
($\lambda=0$) and a flow \textquotedblleft past\textquotedblright \ $B$
($\lambda=1$), respectively. The system (\ref{NS})-(\ref{fluxnul}) corresponds
to $n=2$ and $\lambda=0$. The case $\lambda=0$ is in many respects more
complicated than the case $\lambda=1$, and, whereas the picture is rather
complete for $n=3$, the case $n=2$, $\lambda=0$, presents particular
difficulties. The difficulty with the classical method for solving the Navier Stokes equations consists in the fact that the linearization around $\mathbf{u=0}$ is given by the Stokes
system, which, for $n=2$, does not admit a solution satisfying (\ref{BC}),
unless the domain $B$ and the boundary data $\mathbf{u}^{\ast}$ satisfy
certain symmetry conditions. This fact is known as the Stokes paradox. {For
completeness we note that if one relaxes the no flux condition (\ref{fluxnul}%
), there exists a two parameter family of solutions to (\ref{NS}%
)-(\ref{fluxnul}), the so called Hamel solutions, see \cite{Galdi94II}. {These
examples emphasize that the decay of solutions can be arbitrary slow and that
uniqueness might be lost for some boundary data. However, these solutions have
flux larger than one, and are far from the regime which we will consider
here.}\medskip}

In what follows we construct a new class of solutions to (\ref{NS}%
)-(\ref{fluxnul}), by linearizing not around $\mathbf{u=0}$, but around
$\mathbf{u}=\mu \mathbf{x}^{\perp}/\left \vert \mathbf{x}\right \vert ^{2}$, with
$\left \vert \mu \right \vert > \sqrt{48}$. This improves the decay of the
solutions to the vorticity equation, yielding vorticities decaying at infinity
generically faster than $\left \vert \mathbf{x}\right \vert ^{-2}$, instead of
like $\left \vert \mathbf{x}\right \vert ^{-1}$ as would be the case for the
Stokes equation, thus avoiding the Stokes paradox when reconstructing
$\mathbf{u}$ \textit{via} the Bio-Savart law.

\medskip

To put our problem into a wider context, we briefly recall the concept of weak
solutions for (\ref{eq_NSE3}), (\ref{BC}) { (also known as generalized
solutions or $D$--solutions),} and the method of J. Leray \cite{Leray33} for
proving the existence of such weak solutions.

\begin{definition}
Given $\mathbf{u}^{\ast}\in H^{1/2}(\partial{B})$ satisfying (\ref{fluxnul}),
a function $\mathbf{u}$ which satisfies the following conditions is called a
weak solution to (\ref{eq_NSE3}), (\ref{BC}):

\begin{enumerate}
\item $\mathbf{u}\in D^{1,2}(\mathbb{R}^{n}\setminus \overline{B})$, where
$D^{1,2}(\mathbb{R}^{n}\setminus \overline{B})$ is the subset of $L_{loc}%
^{1}(\mathbb{R}^{2}\setminus \overline{B})$ containing functions with gradient
in $L^{2}(\mathbb{R}^{n}\setminus \overline{B}),$

\item $\mathbf{u}$ is divergence-free and $\mathbf{u}=\mathbf{u}^{\ast}$ on
$\partial{B}$,

\item for all divergence-free vector fields $\mathbf{w}\in 
C_{c}^{\infty}(\mathbb{R}^{n}\setminus B)$, {there holds}:%
\[
\int_{\mathbb{R}^{n}\setminus \overline{B}}\nabla \mathbf{u}\colon
\nabla \mathbf{w} + \int_{\mathbb{R}^{n}\setminus \overline{B}}(\left(
\mathbf{u}\cdot \nabla \right)  \mathbf{u}+\lambda \partial_{1}\mathbf{u}%
)\cdot \mathbf{w}=0\text{ }.
\]

\end{enumerate}
\end{definition}

The method of J. Leray to prove the existence of solutions according to this
definition, and a posteriori to (\ref{eq_NSE3}), (\ref{BC}), in the sense of
distributions, consists in the following steps:

\begin{itemize}
\item First, one introduces a sequence of approximate problems by restricting
(\ref{eq_NSE3}) to bounded subsets $\Omega \subset \mathbb{R}^{n} $ containing
$B$, with zero Dirichlet boundary conditions on $\partial \Omega \setminus
\partial{B}$.

\item Second, one proves the existence of (weak) solutions to all these
approximate problems.

\item Third, one shows that for any sequence of bounded subsets exhausting
$\mathbb{R}^{n}\setminus \overline{B}$, there exists a subsequence, such that
the corresponding approximate solutions converge to a weak solution of
(\ref{eq_NSE3}), (\ref{BC}).

\item Finally, given a weak solution $\mathbf{u}$, a pressure $p$ an be
constructed \textit{via} De Rham's theory, such that the equations
(\ref{eq_NSE3}) are satisfied in $\mathcal{D}^{\prime}(\mathbb{R}^{n}%
\setminus \overline{B})$.
\end{itemize}

 See also
\cite{Hillairet07d,HillairetSerre03,Serre87,Weinberger72,Weinberger73}, where
this method has been adapted {to a similar system with more general boundary
conditions}.  Note that if $B$ has a smooth boundary, the ellipticity of the Stokes operator
(see \cite[Section IX.1]{Galdi94I}) and the smoothness of $\mathbf{u}^{\ast}$
imply that weak solutions are smooth. Therefore, for smooth data, the only possible
shortcoming of weak solutions is that they may not satisfy the boundary
condition at infinity in a point-wise sense. {Much} work has been devoted to
clarify the situation in various cases (see \cite{Galdi94II} for more details):

\bigskip

For $n=3$, the condition $\mathbf{u}\in D^{1,2}(\mathbb{R}^{3}\setminus
\overline{B})$ implies that weak solutions tend to zero at infinity.
 The exact decay can be obtained by various methods yielding the following results:

\begin{itemize}
\item for $\lambda=1$, there exists a solution that decays like the
fundamental solution of the Oseen equation (the linear system obtained from
(\ref{eq_NSE3}) by deleting the nonlinear convective terms)
\cite{Babenko73,Farwig92,Finn59}. This result can be obtain by a detailed
analysis of the Oseen equation with a source term in the usual Sobolev spaces
\cite{Babenko73,Finn59}, and also in weighted Sobolev spaces \cite{Farwig92}.

\item for $\lambda=0$ and sufficiently small boundary data, there exists a
unique weak solution, and this solution decays like a Landau solution
\cite{KorolevSverak10}, a special solution of the nonlinear system which
decays like $1/|\mathbf{x}|.$ This result can been obtained by constructing
first a strong solution to (\ref{BC}), (\ref{eq_NSE3}), which is asymptotic to
the Landau solution, by perturbative techniques. Using the known decay of this
particular solution as an input \cite[Section IX.9]{Galdi94II}, one then
proves a weak-strong uniqueness result for small data.
\end{itemize}

For $n=2$, the situation is more delicate since the condition $\mathbf{u}\in
D^{1,2}(\mathbb{R}^{2}\setminus \overline{B})$ does not guarantee that the
boundary condition at infinity is satisfied:

\begin{itemize}
\item For $\lambda=1$, the relevant linear system is again the Oseen equation,
but the results concerning the decay are limited to small data, since, as for
the case $n=3$, $\lambda=0$, perturbative techniques are used to prove the
existence of a strong solution decaying at infinity like the fundamental
solution of the Oseen equation. This solution is then again used as an input
to a weak strong uniqueness argument in order to show the decay of weak
solutions. These results can be found in \cite{Galdi93}.

\item The case $\lambda=0$ remains largely open. {As we already pointed out,
the} problem is that the solution to the Stokes equation with boundary data
$\mathbf{u}^{\ast}\neq0$ diverges at infinity, unless one makes additional
assumptions on the domain $B$ and the data $\mathbf{u}^{\ast}$. 
Partial results for the Navier Stokes system {with symmetric data} can be found in 
\cite{Galdi02,Russo10,Russopp,PileckasRusso12} .
\end{itemize}

\bigskip

From now on we limit the discussion to the case where $B$ is a disk of radius
one. We choose $\mathbf{x}=(x,y)$ Cartesian coordinates with the origin at the
center of $B$, $(r,\theta)\in \Omega:=(0,\infty)\times(-\pi,\pi)$ the
associated polar coordinates, and $(\mathbf{e}_{r},\mathbf{e}_{\theta})$ the
corresponding local orthonormal basis. For the function $\mathbf{u}$ we have
in polar coordinates:
\begin{equation}
\mathbf{u}(r,\theta)=u_{r}(r,\theta)\mathbf{e}_{r}+u_{\theta}(r,\theta
)\mathbf{e}_{\theta},\quad \forall \text{ }(r,\theta)\in \Omega \,. \label{pc}%
\end{equation}

The following theorem is our main result:

\begin{theorem}
\label{thm_main} Let $\mu_{0}>{\mu_{crit}}\equiv \sqrt{48}$ and $\mathbf{u}%
^{\ast}\in C^{\infty}(\partial B)$ satisfying \eqref{fluxnul} be
sufficiently close to ${\mathbf{u}_{\mu_{0}}^{\ast}:=}\mu_{0}\mathbf{e}%
_{\theta}$. Then, the equations (\ref{NS}), (\ref{BC}), with boundary
condition $\mathbf{u}^{\ast}$, have at least one solution $(\mathbf{u}%
,p)\in C^{\infty}(\mathbb{R}^{2}\setminus \overline{B})^{2}%
\times C^{\infty}(\mathbb{R}^{2}\setminus \overline{B})$. Moreover,
there exist $\mu$ close to $\mu_{0}$ such that:
\begin{equation}
\lim_{r\rightarrow \infty}\text{ }r\left \Vert \mathbf{u}(r,\theta)-\frac
{\mu \mathbf{e}_{\theta}}{r};L^{\infty}(-\pi,\pi)\right \Vert =0\text{ }.
\label{bound}%
\end{equation}
\end{theorem}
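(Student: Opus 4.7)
The plan is to substitute the ansatz $\mathbf{u} = \mu \mathbf{e}_\theta/r + \mathbf{v}$ into \eqref{NS}, where $\mu$ is a scalar near $\mu_0$ that will be determined as part of the solution. Since the swirl flow $\mu\mathbf{e}_\theta/r$ is itself an exact Navier-Stokes solution with pressure $-\mu^2/(2r^2)$ and is curl-free on $\mathbb{R}^2\setminus\overline{B}$, taking the curl of the perturbed system both eliminates the pressure and reduces the linearization to a drift-diffusion operator acting on $\omega := \operatorname{curl} \mathbf{v}$,
\[
L_\mu \omega := \Delta \omega - \frac{\mu}{r^2}\partial_\theta \omega = \mathbf{v}\cdot\nabla \omega,
\]
coupled with boundary conditions $\mathbf{v}|_{\partial B} = \mathbf{u}^\ast - \mu \mathbf{e}_\theta$ and $\mathbf{v}\to 0$ at infinity, with $\mathbf{v}$ reconstructed from $\omega$ by a Biot-Savart operator adapted to the exterior domain.

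The next step is a careful spectral analysis of $L_\mu$. On the Fourier mode $\omega_k(r) e^{ik\theta}$ the equation $L_\mu \omega = 0$ is of Euler type, with indicial exponents $\pm \sqrt{k^2 + ik\mu}$, whose decaying root has real part $\sqrt{(k^2 + k\sqrt{k^2+\mu^2})/2}$. This expression is monotone in $|k|$, so its minimum over $k\ne 0$ is attained at $|k|=1$; requiring that this minimum strictly exceed $2$ (so that the vorticity decays faster than $r^{-2}$ and the reconstructed velocity faster than $r^{-1}$, sidestepping the Stokes paradox) gives exactly $\mu^2 > 48$, which is the origin of the threshold $\mu_{crit}=\sqrt{48}$. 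I would then work in a scale of weighted Banach spaces encoding this mode-dependent algebraic decay at infinity together with the regularity needed near $\partial B$, and build a bounded right inverse of $L_\mu$ on the non-zero modes by variation of parameters, with uniform mode-by-mode estimates on the reconstructed velocity.

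The zero mode is singular in this framework, since the radial kernel of $L_\mu$ is spanned by $1$ and $\log r$, neither of which decays. Here the free parameter $\mu$ plays a distinguished role: by treating $(\mathbf{v},\mu)$ jointly as unknowns and allowing $\mu$ to vary near $\mu_0$, one absorbs the zero-mode obstruction (morally, the total circulation generated by the boundary data) into the background swirl, so that the residual radial component of $\omega$ can be solved in a decaying class. I would implement this through a Lyapunov-Schmidt type reduction, or equivalently by an implicit function theorem argument in which $\mu$ is the auxiliary scalar matched to the single compatibility condition. With the enlarged linearization invertible, the full nonlinear problem is recast as a fixed-point equation for $\omega$ in a small ball of the weighted space, and closed by Banach contraction using the quadratic nature of $\mathbf{v}\cdot\nabla\omega$ and the smallness of $\mathbf{u}^\ast - \mu_0 \mathbf{e}_\theta$. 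Interior elliptic regularity for Stokes together with the smoothness of $\partial B$ and $\mathbf{u}^\ast$ gives $(\mathbf{u},p)\in C^\infty$ via De Rham's theorem for the pressure, and the prescribed decay of $\mathbf{v}$ in the weighted space translates directly into \eqref{bound}.

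The principal obstacle, I expect, is the construction of the weighted functional framework: one must simultaneously achieve mode-by-mode algebraic decay compatible with the indicial exponents (tight enough to beat the Stokes paradox while large enough to accommodate a Biot-Savart reconstruction solving the Dirichlet problem on the exterior domain with non-zero boundary data), and also control the nonlinear convective term $\mathbf{v}\cdot\nabla\omega$ within the same scale. A secondary delicate point is verifying the non-degeneracy of the linearization with respect to $\mu$ at $\mu_0$, which is what justifies the Lyapunov-Schmidt step absorbing the zero-mode obstruction, and which is presumably what forces the restriction $\mu_0 > \sqrt{48}$ rather than merely $\mu_0 \ne 0$.
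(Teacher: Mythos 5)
Your proposal matches the paper's strategy essentially step for step: linearize around the pure swirl $\mu\mathbf{e}_\theta/r$, pass to the vorticity (equivalently the stream-function/vorticity pair), Fourier-analyze in $\theta$ to obtain Euler equations with indicial exponents $\pm\sqrt{k^2+ik\mu}$, observe that the real part of the decaying root is minimized at $|k|=1$ and impose that it exceed $2$ (giving $\mu^2>48$), solve mode-by-mode in weighted spaces by variation of parameters, treat the zero mode via the free parameter $\mu$, and close the nonlinear problem by a Banach-space inverse-function/fixed-point argument followed by De Rham for the pressure. That is exactly what the paper does (Lemmas \ref{lem_S}, \ref{lem_NL}, \ref{lem_Smu}, Propositions \ref{prop_S_w}, \ref{lem_S_gamma}, and Theorem \ref{thm_main_tech}).

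One small inaccuracy in your last paragraph: you speculate that the restriction $\mu_0>\sqrt{48}$ is "presumably what forces" the non-degeneracy of the $\mu$-dependence used in the Lyapunov--Schmidt step. That is not so, and in fact you already gave the correct explanation earlier. The threshold $\sqrt{48}$ arises purely from the requirement $\operatorname{Re}\zeta_{\pm1}>2$, i.e.\ that the $|k|=1$ vorticity mode decay faster than $r^{-2}$ so the reconstructed velocity beats $1/r$; the invertibility of the scalar map $\mu\mapsto\mu-\partial_r\gamma_0(1)$ is obtained in the paper for free once the data are small (the correction $\partial_r\gamma_0(1)$ is $O(\varepsilon)$, so the map is a small perturbation of the identity), and would hold at any $\mu_0$ at which the rest of the scheme works. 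Another technical point your sketch glosses over, which the paper addresses explicitly, is that in the De Rham step not every compactly supported divergence-free test field is of the form $\nabla^\perp\varphi$ with $\varphi$ compactly supported; the paper shows the residual term (an integral $I_0$ against $\mathbf{w}_0=\nabla^\perp\Phi_0$) vanishes by a boundary-term computation using the decay of $\mathbf{u}$, $\nabla\mathbf{u}$, and $\nabla\omega$.
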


\begin{remark}
If the pair $(u(x,y),v(x,y))$ is a solution for the boundary condition
$(u^{\ast}(x,y),v^{\ast}(x,y))$, then the pair $(u(x,-y),-v(x,-y))$ is a
solution for the boundary condition $(u^{\ast}(x,-y),-v^{\ast}(x,-y))$. Thus,
our result extends to $\mu_{0}<-{\mu_{crit}}$.
\end{remark}

\begin{remark}
If $\mathbf{u}(r,\theta)$ is a solution for the boundary condition\textbf{\ }%
$\mathbf{u}^{\ast}$on the complement of the unit disk, then for all
$\lambda>0$, $\lambda \mathbf{u}(\lambda r,\theta)$ is a solution for the
boundary condition $\lambda \mathbf{u}^{\ast}$ on the complement of the disk of
radius $\lambda^{-1}$.
\end{remark}

\begin{remark}
The restriction to the case where $B$ is a disk is for the sake of simplicity
only. This permits to rewrite the system in polar coordinates, yielding
explicit expressions for the solutions. We expect that with more work the
results can be generalized to arbitrary smooth $B$.
\end{remark}

To prove Theorem \ref{thm_main} we proceed as follows: We fix $\mu>\mu_{crit}$
and consider the pair $(\mathbf{u}_{\mu},p_{\mu})$:
\begin{equation}
\mathbf{u}_{\mu}(r,\theta)=\dfrac{\mu \mathbf{e}_{\theta}}{r}\text{ },\qquad
p_{\mu}(r,\theta)=-\frac{1}{2}\dfrac{\mu^{2}}{r^{2}}\text{ },\qquad
\forall \,(r,\theta)\in{\Omega}\text{ }, \label{explicit}%
\end{equation}
which is an exact solution to (\ref{NS}), (\ref{BC}). Next we set,
$(\mathbf{u},p)=(\mathbf{u}_{\mu}+\mathbf{v},p=p_{\mu}+$ ${q})$ and prove,
that for all sufficiently small boundary conditions $\mathbf{v}^{\ast}$
satisfying%
\begin{equation}
\int_{\partial B}\mathbf{v}^{\ast}\cdot \mathbf{n}\text{ }\mathrm{d}%
\sigma=0\text{ },
\end{equation}
there existence of a solution $(\mathbf{v},$ ${q})\in C^{\infty
}(\mathbb{R}^{2}\setminus \overline{B})^{2}\times C^{\infty
}(\mathbb{R}^{2}\setminus \overline{B})$ such that $\left.  \mathbf{v}%
\right \vert _{\partial B}=\mathbf{v}^{\ast}+\mu_{\ast}$, for some $\mu_{\ast
}>{\mu_{crit}}$ depending on $\mu$ and $\mathbf{v}^{\ast}$. In a final step, we
show that this function can be inverted, giving $\mu$ as a function of
$\mu_{\ast}$ and $\mathbf{v}^{\ast}$, thus yielding Theorem \ref{thm_main}.

The feasibility of our approach relies on the fact {that }the system obtained
by linearizing (\ref{NS}), (\ref{BC}) around the explicit solution
$(\mathbf{u}_{\mu},p_{\mu})$ can be analyzed explicitly. As mentioned above,
when compared with the case $\mu=0$,\textit{\ i.e.}, the Stokes equation, the vorticity decays for $\mu>{\mu_{crit}}$
faster than ${1/r^{2}}$, instead of like ${1/r}$, such that $\mathbf{u}$ can be
shown {to} decay faster than $1/r$ at infinity, making the nonlinearity
subcritical. Introducing suitable function spaces, we are then able to solve
the full non-linear system by a classical fixed-point argument.

\section{Dynamical system formulation}

Let $(\mathbf{u},p)\in C^{\infty}(\mathbb{R}^{2}\setminus B)$ be a
solution to (\ref{NS}), (\ref{BC}), satisfying (\ref{fluxnul}). We first make
the construction of the stream-function $\psi$
associated with $\mathbf{u}$ precise. Since $\mathbf{u}\in C^{\infty
}(\mathbb{R}^{2}\setminus B)$, we have in particular that $\mathbf{u}^{\ast
}\in C^{\infty}(\partial B)$. Let $\mathbf{u}_{int} \in C%
^{\infty}(\overline{B})$ satisfy $\mathbf{u}_{int}=\mathbf{u}^{\ast}$ on
$\partial{B}$. Such a function exists since
$\mathbf{u}^{\ast}$ satisfies (\ref{fluxnul}). For instance, $\mathbf{u}%
_{int}$ can be the solution to the Stokes equations on $B$, with boundary
condition $\mathbf{u}^{\ast}$ on $\partial B$. Then, setting:
\[
{\mathbf{\bar{u}}}(x,y)=\left \{
\begin{array}
[c]{ll}%
\mathbf{u} & \text{in $\mathbb{R}^{2}\setminus \overline{B\,}$},\\
\mathbf{u}_{int} & \text{in $\overline{B}$}\,,
\end{array}
\right.
\]
we obtain a continuous divergence-free vector-field on the whole of
$\mathbb{R}^{2}$. Furthermore, this function is smooth on both sides of
$\partial B$ so that there exists $\psi \in C^{1}(\mathbb{R}^{2}%
)\cap C^{\infty}(\overline{B})\cap C^{\infty}(\mathbb{R}%
^{2}\setminus{B})$ satisfying $\mathbf{u}=\nabla^{\bot}\psi$.

Instead of (\ref{NS}), (\ref{BC}), we consider now the equation for the stream
function $\psi$ and the vorticity $\omega=\nabla \times \mathbf{u}$,
\begin{equation}
\left \{
\begin{array}
[c]{rcl}%
\Delta \psi & = & -\omega \\
\Delta \omega & = & \mathbf{u}\cdot \nabla \omega
\end{array}
\right.  \quad \text{ in $\mathbb{R}^{2}\setminus \overline{B}\,.$}\nonumber
\end{equation}
For the function $\mathbf{u}$ we have in polar coordinates (\ref{pc}), and the
vorticity becomes:%
\[
\omega=\dfrac{1}{r}\partial_{r}(ru_{\theta})-\dfrac{1}{r}\partial_{\theta
}u_{r},\quad \forall \text{ }(r,\theta)\in \Omega \,.
\]
For the boundary data we have:%
\[
\mathbf{u}^{\ast}(\theta)=u_{r}^{\ast}(\theta)\mathbf{e}_{r}+u_{\theta}^{\ast
}(\theta)\mathbf{e}_{\theta}\text{ },\quad \forall \text{ }\theta \in(-\pi
,\pi)\,.
\]
In polar coordinates we get the following equations for the stream function
$\psi$ and the vorticity $\omega$:
\begin{equation}
\left \{
\begin{array}
[c]{rcl}%
\partial_{rr}\psi+\dfrac{1}{r}\partial_{r}\psi+\dfrac{1}{r^{2}}\partial
_{\theta \theta}\psi & = & -\omega \,,\\[6pt]%
\partial_{rr}\omega+\dfrac{1}{r}\partial_{r}\omega+\dfrac{1}{r^{2}}%
\partial_{\theta \theta}\omega & = & u_{r}\partial_{r}\omega+\dfrac{u_{\theta}%
}{r}\partial_{\theta}\omega \,,
\end{array}
\right.  \quad \forall \text{ }(r,\theta)\in \Omega \text{ }, \label{NS_polar}%
\end{equation}
and%
\begin{equation}
\left \{
\begin{array}
[c]{rcl}%
\smallskip u_{r} & = & \dfrac{\partial_{\theta}\psi}{r}\,,\\
u_{\theta} & = & -\partial_{r}\psi \,,
\end{array}
\right.  \text{ }\forall \text{ }(r,\theta)\in \Omega \text{ },
\label{comp_polar}%
\end{equation}
together with the boundary conditions:%
\begin{equation}
\left \{
\begin{array}
[c]{rclcrcl}%
u_{r}(1,\theta) & = & u_{r}^{\ast}(\theta)\,, &  & \lim_{r\rightarrow \infty}{u}_{r}(r,\theta) & = & 0\,,\\[6pt]%
u_{\theta}(1,\theta) & = & u_{\theta}^{\ast}(\theta)\,, &  & \lim
_{r\rightarrow \infty}{u}_{\theta}(r,\theta) & = & 0\,,
\end{array}
\right. 
 \quad \forall \text{ }\theta \in(-\pi,\pi)\,. \label{bc_polar}%
\end{equation}
For the exact solution $(\mathbf{u}_{\mu},p_{\mu})$ given by (\ref{explicit})
we have in polar coordinates for the corresponding stream-function-vorticity
pair $(\psi_{\mu},\omega_{\mu})$, for all $\mu \in \mathbb{R}$:
\[
\left \{
\begin{array}
[c]{ccc}%
\smallskip \psi_{\mu}(r,\theta) & = & -{\mu \ln(r)}\,,\\
\omega_{\mu}(r,\theta) & = & 0\text{ },
\end{array}
\right.  \qquad \forall \text{ }(r,\theta)\in \Omega \, \text{$.$}%
\]
In order to prove Theorem \ref{thm_main} we construct, as explained above, a
solution which is a perturbation of the explicit solutions $(\mathbf{u}_{\mu
},p_{\mu})$. We therefore set $\psi=\psi_{\mu}+\gamma$ and $\omega=\omega
_{\mu}+w$. Substituting this Ansatz into (\ref{NS_polar}), (\ref{comp_polar}),
we obtain the following equivalent system for the unknowns $(\gamma,w)$:
\begin{equation}
\left \{
\begin{array}
[c]{lcl}%
\partial_{rr}\gamma+\frac{1}{r}\partial_{r}\gamma+\dfrac{1}{r^{2}}%
\partial_{\theta \theta}\gamma & = & -w\,,\\[6pt]%
\partial_{rr}w+\frac{1}{r}\partial_{r}w+\dfrac{1}{r^{2}}\partial_{\theta
\theta}w-\dfrac{\mu}{r^{2}}\partial_{\theta}w & = & {\dfrac{\partial_{\theta
}\gamma}{r}\partial_{r}w-\dfrac{\partial_{r}\gamma}{r}\partial_{\theta}%
w}\text{ },
\end{array}
\right.  \quad \text{ }\forall \text{ }(r,\theta)\in \Omega \text{ },
\label{NS_pert}%
\end{equation}
with the boundary conditions:%
\begin{equation}
\left \{
\begin{array}
[c]{lcl}%
\smallskip \partial_{\theta}\gamma(1,\theta) & = & v_{r}^{\ast}(\theta)\text{
},\\
\smallskip \partial_{r}\gamma(1,\theta) & = & -v_{\theta}^{\ast}(\theta)\text{
},\\
{\lim_{r\rightarrow \infty}}\, \left(  |\gamma(r,\theta)|+\left \vert
\partial_{r}\gamma(r,\theta)\right \vert \right)  & = & 0\text{ },
\end{array}
\right.  \quad \text{ }\forall \text{ }\theta \in(-\pi,\pi)\text{ },
\label{BC_pert}%
\end{equation}
for certain $(v_{r}^{\ast}(\theta),v_{\theta}^{\ast}(\theta))$ to be defined
later on, satisfying:%
\begin{equation}
\int_{\partial B}v_{r}^{\ast}\text{ }\mathrm{d}\sigma=0\text{ }, \label{abc}%
\end{equation}
and which are small in a sense to be made precise.

Following the method developed in \cite{HillairetWittwer09}, we solve
(\ref{NS_pert}), (\ref{BC_pert}), for data ${(v_{r}^{\ast},v_{\theta}^{\ast}%
)}$, by interpreting the radial coordinate $r$ as a time and by expanding in a
Fourier series:
\[
\gamma(r,\theta)=\sum_{n\in \mathbb{Z}}\gamma_{n}(r)e^{in\theta}\text{ },\qquad
w(r,\theta)=\sum_{n\in \mathbb{Z}}w_{n}(r)e^{in\theta}\text{ }.
\]

\paragraph{Notation.}

To unburden the notation we write for the Fourier series of $\gamma$ and $w$:%
\begin{equation}
{\hat{\gamma}=(\gamma_{n})_{n\in \mathbb{Z}}\,,\quad \hat{w}=(w_{n}%
)_{n\in \mathbb{Z}}\,,} \label{notation}%
\end{equation}
and analogously for all other functions.

\medskip

From (\ref{NS_pert}), (\ref{BC_pert}) we obtain, for $n\in \mathbb{Z}$, the
following system of ordinary differential {equations:}%
\begin{equation}
\left \{
\begin{array}
[c]{lll}%
\smallskip \partial_{rr}\gamma_{n}+\dfrac{1}{r}\partial_{r}\gamma_{n}%
-\dfrac{n^{2}}{r^{2}}\gamma_{n} & = & -w_{n}\,,\\
\partial_{rr}w_{n}+\dfrac{1}{r}\partial_{r}w_{n}-\dfrac{i\mu n+n^{2}}{r^{2}%
}w_{n} & = & F_{n}\,,
\end{array}
\right.  \quad \text{ on }(1,\infty)\,, \label{Fourier}%
\end{equation}
with the source term $F_{n}$ given by:
\begin{equation}
F_{n}= - {\dfrac{i}{r}}{\sum_{k+l=n}}\left(  k\,w_{k}\, \partial_{r}\gamma
_{l}-l\, \gamma_{l}\, \partial_{r}w_{k}\right)  \,, \label{eq_Fn}%
\end{equation}
and with the boundary conditions:%
\begin{equation}
\left \{
\begin{array}
[c]{lcl}%
\smallskip in\gamma_{n}(1) & = & v_{r,n}^{\ast}\text{ },\\
\smallskip - \partial_{r}\gamma_{n}(1) & = & v_{\theta,n}^{\ast}\text{ },\\
{\lim_{r\rightarrow \infty}}\, \left(  |\gamma_{n}(r)|+\left \vert \partial
_{r}\gamma_{n}(r)\right \vert \right)  & = & 0\text{ },
\end{array}
\right.  \quad \forall \,n\in \mathbb{Z}\setminus \{0\} \,. \label{eq_bcFn}%
\end{equation}
Note that $v_{r,0}^{\ast}=0$ by assumption (\ref{abc}) {and that} the value of
$\gamma_{0}(1)$ is irrelevant,\textit{ i.e.}, the stream function is only
unique up to an additive constant. As we {show later in this section, the value
$v_{\theta,0}^{\ast}$} cannot be chosen freely if one wants the solution
$\gamma_{0}$ to satisfy the boundary condition at infinity.

\medskip

For convenience, we {first solve \eqref{Fourier} with boundary conditions:%
\begin{equation}
\left \{
\begin{array}
[c]{lcl}%
\smallskip \gamma_{n}(1) & = & \gamma_{n}^{\ast}\text{ },\\
\smallskip w_{n}(1) & = & \omega_{n}^{\ast}\text{ },\\
{\lim_{r\rightarrow \infty}}\, \left(  |\gamma_{n}(r)|+\left \vert
w_{n}(r)\right \vert \right)  & = & 0\text{ },
\end{array}
\right.  \quad \forall \,n\in \mathbb{Z}\setminus \{0\} \,, \label{BC_Fourier}%
\end{equation}
instead of (\ref{BC_Fourier}).} Once the solution is constructed we then
re-express the solution in terms of the original boundary conditions.

Assuming that the functions $F_{n}$ are continuous and decay sufficiently
rapidly at infinity, there exits exactly one solution to (\ref{Fourier})
satisfying \eqref{BC_Fourier}. Since the Green's function of equations
(\ref{Fourier}) are $r\mapsto r^{\pm \left \vert n\right \vert }$ and $r\mapsto
r^{\pm \zeta_{n}}$, respectively, where $\zeta_{n}=\sqrt{n^{2}+i\mu n}$, with 
$\mathcal{R}e(\sqrt{z}) >0$ for $z \in \mathbb C \setminus (-\infty,0]$,  
the solutions are given by the following explicit expressions:%
\begin{equation}
\left \{
\begin{array}
[c]{lll}%
\bigskip \gamma_{n}(r) & = & \dfrac{\overline{\gamma}_{n}}{r^{|n|}}+%
{\displaystyle \int_{r}^{\infty}}
\dfrac{sw_{n}(s)}{2|n|}\left(  \dfrac{r}{s}\right)  ^{|n|}\text{$\,
\mathrm{d}s$}+%
{\displaystyle \int_{1}^{r}}
\dfrac{sw_{n}(s)}{2|n|}\left(  \dfrac{s}{r}\right)  ^{|n|}\text{$\,
\mathrm{d}s$}\,,\\
w_{n}(r) & = & \dfrac{\overline{w}_{n}}{r^{\zeta_{n}}}-%
{\displaystyle \int_{r}^{\infty}}
\dfrac{sF_{n}(s)}{2\zeta_{n}}\left(  \dfrac{r}{s}\right)  ^{\zeta_{n}%
}\text{$\, \mathrm{d}s$}-%
{\displaystyle \int_{1}^{r}}
\dfrac{sF_{n}(s)}{2\zeta_{n}}\left(  \dfrac{s}{r}\right)  ^{\zeta_{n}%
}\text{$\, \mathrm{d}s$}\,,
\end{array}
\right.  \quad \text{ for }n\in \mathbb{Z}\setminus \{0\} \,. \label{eq_wngamman}%
\end{equation}
with:%
\begin{equation}
\left \{
\begin{array}
[c]{lll}%
\medskip \overline{\gamma}_{n} & = & \gamma_{n}^{\ast}-%
{\displaystyle \int_{1}^{\infty}}
\dfrac{sw_{n}(s)}{2|n|}\left(  \dfrac{1}{s}\right)  ^{|n|}\text{$\,
\mathrm{d}s$}\,,\\
\overline{w}_{n} & = & w_{n}^{\ast}+%
{\displaystyle \int_{1}^{\infty}}
\dfrac{sF_{n}(s)}{2\zeta_{n}}\left(  \dfrac{1}{s}\right)  ^{\zeta_{n}%
}\text{$\, \mathrm{d}s\,,$}%
\end{array}
\right.  \quad \text{ for }n\in \mathbb{Z}\setminus \{0\} \,.
\label{eq_wngammanb}%
\end{equation}
For $n=0$, there still exist solutions to (\ref{Fourier}) decaying at
infinity, but these solutions exist only for exactly one boundary condition.
The reason is that for $n=0$ the Green's functions for the equations in
(\ref{Fourier}) are $r\mapsto1$ and $r\mapsto \ln r$, which do not decay at
infinity. The solutions decaying at infinity are:%
\begin{equation}
{\left \{
\begin{array}
[c]{lll}%
\medskip \gamma_{0}(r) & = &
{\displaystyle \int_{r}^{\infty}}
\dfrac{1}{s}%
{\displaystyle \int_{s}^{\infty}}
t\,w_{0}(t)\text{$\, \mathrm{d}t\, \mathrm{d}s$}\,,\\[10pt]%
w_{0}(r) & = & -%
{\displaystyle \int_{r}^{\infty}}
\dfrac{1}{s}%
{\displaystyle \int_{s}^{\infty}}
t\,F_{0}(t)\text{$\, \mathrm{d}t\, \mathrm{d}s$}\,.
\end{array}
\right.  } \label{eq_w0gamm0}%
\end{equation}
{We recall that the value of $\gamma_{0}(1)$ is irrelevant}. The value of
$w_{0}(1)$ fixes the value of {$\partial_{r}\gamma_{0}(1)= - v_{\theta,0}^{\ast}$
as a function of $\mu$. Once the solution is constructed, we will show that
$\mu \mapsto \mu - \partial_{r}\gamma_{0}(1)=:\mu^{\ast}$ can be inverted, which
then shows the existence of a solution for an open set of boundary
conditions.}

\section{Functional framework and main result}

We now introduce the function spaces which we use to solve the system
{(\ref{eq_bcFn})--(\ref{eq_w0gamm0})}. We use the notation introduced in
(\ref{notation}):

\begin{definition}
\label{spaces}Given $\kappa>0$, $\alpha>0$ and $m\in \mathbb{N}$, such that
$m<\kappa$, we set:
\[%
\begin{array}
[c]{rcl}%
\mathcal{B}_{\kappa} & := & \{ \hat{\varphi}^{\ast}\in \mathbb{C}^{\mathbb{Z}%
}\text{ such that }{\sup_{n\in \mathbb{Z}}}(1+|n|)^{\kappa}|\varphi_{n}^{\ast
}|<\infty \} \text{ },\\[10pt]%
\mathcal{B}_{\kappa}^{0} & := & \{ \hat{\varphi}^{\ast}\in \mathcal{B}_{\kappa
}\text{ such that }\varphi_{0}^{\ast}=0\} \text{ },
\end{array}
\]
and
\[%
\begin{array}
[c]{rcl}%
\mathcal{B}_{\alpha,\kappa} & := & \{ \hat{\varphi}\in({C}([1,\infty
);\mathbb{C}))^{\mathbb{Z}},\text{ such that }\ {\sup_{n\in \mathbb{Z}}\,
\sup_{r\in \lbrack1,\infty)}}r^{\alpha}(1+|n|)^{\kappa}|\varphi_{n}%
(r)|<\infty \} \text{ },\\[10pt]%
\mathcal{U}_{\alpha,\kappa}^{m} & := & \{ \hat{\varphi}\in({C}^{m}%
([1,\infty);\mathbb{C}))^{\mathbb{Z}},\text{ such that }(\partial_{r}%
^{l}\varphi_{n})_{n\in \mathbb{Z}}\in \mathcal{B}_{\alpha+l,\kappa
-l},\  \text{for all }0\leq \text{$l\leq m$}\} \text{ }.
\end{array}
\]

\end{definition}

\medskip

These function spaces are reminiscent of weighted Sobolev spaces, and permit
to obtain sharp estimates on the decay of solutions to {(\ref{eq_wngamman}%
)--(\ref{eq_w0gamm0})}. The spaces with one lower index (mainly $\mathcal{B}%
_{\kappa}^{0}$) are used for the
boundary data, whereas the spaces with two lower indices (mainly
$\mathcal{U}_{\alpha,\kappa}^{m}$) will be used for {solving
(\ref{eq_wngamman})--(\ref{eq_w0gamm0})}.\medskip

The spaces introduced in Definition \ref{spaces} satisfy the following
straightforward properties. Given $\alpha>0$, $\kappa>0$, and $m\in \mathbb{N}%
$, such that $m<\kappa$, we have:

\begin{enumerate}
\item The spaces $\mathcal{B}_{\kappa}$, $\mathcal{B}_{\alpha,\kappa}$,
$\mathcal{U}_{\alpha,\kappa}^{m}$ are Banach spaces when equipped with their
respective norms:
\[
\Vert \hat{w}\ ;\  \mathcal{B}_{\kappa}\Vert=\sup_{n\in \mathbb{N}}%
(1+|n|)^{\kappa}|w_{n}|,\qquad \Vert \hat{\varphi}\ ;\  \mathcal{B}%
_{\alpha,\kappa}\Vert=\sup_{n\in \mathbb{Z}}\sup_{r\in \lbrack1,\infty
)}r^{\alpha}(1+|n|)^{\kappa}|\varphi_{n}(r)|\,,
\]%
\[
\Vert \hat{\varphi}\ ;\  \mathcal{U}_{\alpha,\kappa}^{m}\Vert=\sum_{l=0}%
^{m}\Vert(\partial_{r}^{l}\varphi_{n})_{n\in \mathbb{Z}}\ ;\  \mathcal{B}%
_{\alpha+l,\kappa-l}\Vert \,.
\]

\item Given $\alpha \geq \alpha^{\prime}$ and $\kappa \geq \kappa^{\prime}$ we
have the embedding $\mathcal{B}_{\alpha,\kappa}\subset \mathcal{B}%
_{\alpha^{\prime},\kappa^{\prime}}$ together with the bound:
\begin{equation}
\Vert \hat{\varphi}\ ;\mathcal{B}_{\alpha^{\prime},\kappa^{\prime}}\Vert
\leq \Vert \hat{\varphi}\ ;\mathcal{B}_{\alpha,\kappa}\Vert \,,\qquad \forall \,
\hat{\varphi}\in \mathcal{B}_{\alpha,\kappa}\,. \label{eq_emb}%
\end{equation}

\item The space $\mathcal{B}_{\kappa}^{0}$ is a closed subspace of
$\mathcal{B}_{\kappa}$, and thus also a Banach space.
\end{enumerate}

We now formulate the problem of finding a solution to {(\ref{eq_bcFn}%
})--(\ref{eq_w0gamm0}) in such a way that we can apply the inverse map theorem
on our function spaces:

\begin{lemma}
\label{lem_S} Given ${\mu>\mu_{crit}}$, let $\alpha>0$ be sufficiently small and $\kappa >0$.
Then, the map $\mathcal{S}_{\mu}\colon \mathcal{B}_{4+2\alpha,\kappa}%
\times \mathcal{B}_{\kappa+4}^{0}\times \mathcal{B}_{\kappa+2}^{0}%
\rightarrow \mathcal{U}_{\alpha,\kappa+4}^{2}\times \mathcal{U}_{\alpha
+2,\kappa+2}^{2}$, which associates to the triple $(\hat{F},\hat{\gamma}%
^{\ast},\hat{w}^{\ast})$ the pair $\left(  \hat{\gamma},\hat{w}\right)  $ by
virtue of equations (\ref{eq_wngamman}), (\ref{eq_w0gamm0}), together with
(\ref{BC_Fourier}), (\ref{eq_wngammanb}), is linear and continuous.
\end{lemma}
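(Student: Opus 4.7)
The map $\mathcal{S}_\mu$ is manifestly linear, since formulas \eqref{eq_wngamman}--\eqref{eq_w0gamm0} together with \eqref{eq_wngammanb} are all linear in the triple $(\hat F, \hat\gamma^{\ast}, \hat w^{\ast})$. The content of the lemma is therefore the continuity estimate, which I would prove mode-by-mode with constants uniform in $n$.

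The key preliminary observation is that $\mu > \sqrt{48} = \mu_{crit}$ is precisely the condition $\mathcal{R}e(\zeta_{\pm 1}) > 2$: writing $\zeta_1 = a + ib$ with $a > 0$, the relations $a^2 - b^2 = 1$ and $2ab = \mu$ yield $a^2 = (1 + \sqrt{1+\mu^2})/2$, so $a > 2 \iff \mu^2 > 48$. Since $\mathcal{R}e(\zeta_n) \ge |n|$ for $|n| \ge 2$, choosing $\alpha > 0$ sufficiently small ensures $\mathcal{R}e(\zeta_n) \ge 2 + \alpha$ for every $n \in \mathbb{Z}\setminus\{0\}$; moreover $|\zeta_n|$ is uniformly bounded below and grows like $|n|$ at infinity. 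This is exactly the gain that circumvents the Stokes paradox and fixes the decay rate of the vorticity.

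For $n \neq 0$ I would first estimate $w_n$ using \eqref{eq_wngamman}--\eqref{eq_wngammanb}. Starting from $|F_n(s)| \le \|\hat F ; \mathcal{B}_{4+2\alpha,\kappa}\|\,(1+|n|)^{-\kappa}\,s^{-(4+2\alpha)}$, direct integration against the Green's kernels $s^{-\zeta_n}$, $(r/s)^{\zeta_n}$ and $(s/r)^{\zeta_n}$ (using $\mathcal{R}e(\zeta_n) > 2+\alpha$ to make all integrals converge and to absorb one factor of $(1+|n|)$, and $|\zeta_n|^{-1}$ to absorb a second) yields $|w_n(r)| \lesssim (1+|n|)^{-(\kappa+2)}\,r^{-(2+\alpha)}$, together with $|\overline w_n| \lesssim \|\hat F\|(1+|n|)^{-(\kappa+2)} + |w_n^{\ast}|$. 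Plugging this estimate into the $\gamma_n$-formula and running the same scheme with the harmonic kernels $r^{\pm |n|}$ and $|\gamma_n^{\ast}| \lesssim (1+|n|)^{-(\kappa+4)}$, one obtains $|\gamma_n(r)| \lesssim (1+|n|)^{-(\kappa+4)}\,r^{-\alpha}$. For $n = 0$ the boundary data vanish by definition of $\mathcal{B}^0$, and direct estimation of the iterated integrals in \eqref{eq_w0gamm0} gives $|w_0(r)| \lesssim r^{-(2+2\alpha)}$ and $|\gamma_0(r)| \lesssim r^{-2\alpha}$, both of which are stronger than required.

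To complete the $\mathcal{U}^2$-bound I would differentiate \eqref{eq_wngamman}--\eqref{eq_w0gamm0} in $r$ once and twice. The boundary terms at $s=r$ produced by differentiating the integral limits cancel between the two split integrals, while each derivative of $r^{\pm\zeta_n}$ or $r^{\pm|n|}$ brings down a factor of order $1+|n|$ and lowers the $r$-exponent by one; this matches exactly the trade-off encoded in the definition of $\mathcal{U}^m_{\alpha,\kappa}$, so iterating the same integral estimates closes the argument. The main technical difficulty is not conceptual but purely quantitative: one must verify uniformly in $n$ that the $|\zeta_n|^{-1}$ and $|n|^{-1}$ denominators cooperate with the $s$-integrals to yield the claimed powers of $(1+|n|)$, and the delicate case is $n = \pm 1$, which is precisely where the threshold $\mu > \sqrt{48}$ is used to guarantee $\mathcal{R}e(\zeta_{\pm 1}) > 2+\alpha$.
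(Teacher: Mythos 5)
Your proposal is correct and follows essentially the same route as the paper: linearity is immediate, the threshold $\mu>\sqrt{48}$ is translated into $\xi_n:=\mathcal{R}e(\zeta_n)\geq\rho_\mu>2$, $\alpha$ is taken small so that $\xi_n$ stays bounded away from $2+2\alpha$ uniformly in $n$, and then one splits $w_n$ into the homogeneous piece $\overline{w}_n r^{-\zeta_n}$ plus the two convolution integrals $I_1^n$ and $I_\infty^n$, estimates each mode-by-mode against $|F_n(s)|\leq M_F(1+|n|)^{-\kappa}s^{-(4+2\alpha)}$, and transfers the result to $\gamma_n$ via the $r^{\pm|n|}$ kernels. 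This is exactly the content of the paper's Propositions~\ref{prop_S_w} and~\ref{lem_S_gamma}.

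One small inaccuracy: you claim that upon differentiating the formula the boundary terms at $s=r$ ``cancel between the two split integrals'' and that this allows one to ``iterate the same integral estimates.'' This cancellation happens for the \emph{first} derivative only; differentiating a second time leaves a residual $-F_n(r)$, which is precisely the source term of the ODE. The paper sidesteps this by computing $\partial_r w_n$ explicitly and then obtaining the bound on $\partial_{rr}w_n$ directly from the ODE~\eqref{Fourier} rather than by a second direct differentiation. Your route still closes, since $\hat F\in\mathcal{B}_{4+2\alpha,\kappa}\subset\mathcal{B}_{\alpha+4,\kappa}$ is exactly what is needed for $\partial_{rr}w_n\in\mathcal{B}_{\alpha+4,\kappa}$, but the justification is the embedding of the source space, not a cancellation; as written the sentence would mislead a reader at the $l=2$ step. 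Also note that the quantitatively correct requirement is $\xi_n$ bounded away from $2+2\alpha$ (the exponent appearing in $\int_1^r s^{\xi_n-3-2\alpha}\,\mathrm{d}s$), not just $\xi_n\geq 2+\alpha$; this is what the paper's choice $\alpha<\frac12\min(\rho_\mu-2,1)$ encodes.
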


The notion of $\alpha$ small enough will be made precise in the {last
section.}

\begin{lemma}
\label{lem_NL} Let $\alpha>0$ and $\kappa>0$. Then, the map $\mathcal{NL}%
\colon \left(  \mathcal{U}_{\alpha,\kappa+4}^{2}\times \mathcal{U}%
_{\alpha+2,\kappa+2}^{2}\right)  ^{2}\rightarrow \mathcal{B}_{4+2\alpha
,\kappa+1}$, defined by%
\[
\mathcal{NL}[(\hat{\gamma}_{a},\hat{w}_{a}),(\hat{\gamma}_{b},\hat{w}%
_{b})]= \left( r \mapsto -{\frac{i}{r}}{\sum_{k+l=n}}\left(  k\,w_{k}^{a}(r)\, \partial_{r}%
\gamma_{l}^{b}(r)-l\, \gamma_{l}^{a}(r)\, \partial_{r}w_{k}^{b}(r)\right) \right)_{n\in \mathbb Z} \,,
\]
is bilinear and continuous.
\end{lemma}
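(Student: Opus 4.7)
The plan is to verify the claimed mapping property by a pointwise estimate of the general Fourier mode $(\mathcal{NL}[\cdots])_n(r)$, followed by a standard convolution estimate on the resulting sequence in $n$. Bilinearity is immediate from the formula, so the whole task reduces to a uniform bound of the form $\sup_n (1+|n|)^{\kappa+1}\,\sup_{r\ge 1} r^{4+2\alpha}\,|(\mathcal{NL})_n(r)| \lesssim \prod_{\bullet\in\{a,b\}}\bigl(\|\hat\gamma_\bullet;\mathcal U^2_{\alpha,\kappa+4}\|+\|\hat w_\bullet;\mathcal U^2_{\alpha+2,\kappa+2}\|\bigr)$.

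First, I would unpack the definitions of the two $\mathcal U$-spaces to obtain the four pointwise bounds I need: for $\hat\gamma\in\mathcal U^2_{\alpha,\kappa+4}$, $|\gamma_l(r)|\le C r^{-\alpha}(1+|l|)^{-(\kappa+4)}$ and $|\partial_r\gamma_l(r)|\le C r^{-(\alpha+1)}(1+|l|)^{-(\kappa+3)}$; similarly, for $\hat w\in\mathcal U^2_{\alpha+2,\kappa+2}$, $|w_k(r)|\le Cr^{-(\alpha+2)}(1+|k|)^{-(\kappa+2)}$ and $|\partial_r w_k(r)|\le C r^{-(\alpha+3)}(1+|k|)^{-(\kappa+1)}$. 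Plugging these into the typical summand and absorbing the explicit $|k|$ or $|l|$ into one extra factor of $(1+|k|)$ or $(1+|l|)$ gives, for both of the two contributions,
\[
\frac{1}{r}\,|k\,w^a_k(r)\,\partial_r\gamma^b_l(r)| + \frac{1}{r}\,|l\,\gamma^a_l(r)\,\partial_r w^b_k(r)| \;\le\; C\,r^{-(2\alpha+4)}\,(1+|k|)^{-(\kappa+1)}(1+|l|)^{-(\kappa+3)},
\]
up to swapping the roles of $k$ and $l$ in the second term. The $r$-exponent is exactly $4+2\alpha$ as required by the target space $\mathcal B_{4+2\alpha,\kappa+1}$; note that this matching is the point of having chosen the two regularity/decay shifts $(\alpha,\kappa+4)$ and $(\alpha+2,\kappa+2)$ for $\hat\gamma$ and $\hat w$ in Lemma~\ref{lem_S}.

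It then remains to sum in $n$ with the constraint $k+l=n$. For this I would invoke the standard discrete convolution inequality: for any $a,b>1$,
\[
\sum_{k+l=n}(1+|k|)^{-a}(1+|l|)^{-b}\;\le\; C_{a,b}\,(1+|n|)^{-\min(a,b)}.
\]
Applied with $a=\kappa+1$ and $b=\kappa+3$ (both $>1$ since $\kappa>0$), this yields $\sum_{k+l=n}(1+|k|)^{-(\kappa+1)}(1+|l|)^{-(\kappa+3)}\le C(1+|n|)^{-(\kappa+1)}$, which gives exactly the $(1+|n|)^{-(\kappa+1)}$ decay needed to belong to $\mathcal B_{4+2\alpha,\kappa+1}$. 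Combining with the $r$-estimate above closes the proof.

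The only genuine obstacle is bookkeeping: one must check that after absorbing the Fourier factors $k,l$ into the weights, both exponents in the convolution sum exceed $1$ for every $\kappa>0$, and that the $r$-exponents add up to precisely $4+2\alpha$. Both checks are straightforward arithmetic; no smallness of $\alpha$ or extra structural hypothesis is required, which is consistent with the statement of the lemma.
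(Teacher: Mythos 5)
Your proposal is correct and follows essentially the same route as the paper: you extract the same pointwise bounds from the definitions of $\mathcal U^2_{\alpha,\kappa+4}$ and $\mathcal U^2_{\alpha+2,\kappa+2}$, absorb the Fourier factors $k,l$ into the weights, and then bound the convolution sum by $(1+|n|)^{-(\kappa+1)}$. The only difference is cosmetic — the paper proves the discrete convolution estimate by hand (splitting the sum at $[n/2]$), whereas you invoke it as a standard inequality — but the underlying computation is identical.
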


The proofs of these lemmas are postponed to Section \ref{sec_technical}. {We
also introduce the trace operator $\Gamma_{1}$:
\[%
\begin{array}
[c]{rrcl}%
{\Gamma}_{1}\colon & \mathcal{U}_{\alpha_{1},\kappa_{1}}^{2}\times
\mathcal{U}_{\alpha_{2},\kappa_{2}}^{2} & \rightarrow & \mathcal{B}%
_{\kappa_{1}-1}^{0}\times \mathcal{B}_{\kappa_{1}-1}^{0}\,,\\
& (\hat{\gamma},\hat{w}) & \longmapsto & ((in\gamma_{n}(1))_{n\in \mathbb{Z}%
},((\delta_{n,0}-1)\partial_{r}\gamma_{n}(1))_{n\in \mathbb{Z}})\,,
\end{array}
\]
where $\delta_{n,m}$ is the Kronecker symbol. This map is linear and
continuous for arbitrary $(\alpha_{i},\kappa_{i})\in(0,\infty)^{2}$,
$(i=1,2)$. } \medskip

To compute {solutions to (\ref{eq_bcFn})}--(\ref{eq_w0gamm0}), we introduce a
map $\Phi_{\mu}$, which allows to solve the differential equations and
constrain the trace on $r=1$ in one step. Namely, given {$\mu>\mu_{crit}$,} $\alpha$ sufficiently
small and $\kappa>0$, we set:
\begin{equation}%
\begin{array}
[c]{rrcl}%
\Phi_{\mu}\colon & (\mathcal{U}_{\alpha,\kappa+4}^{2}\times \mathcal{U}%
_{\alpha+2,\kappa+2}^{2})\times(\mathcal{B}_{\kappa+4}^{0}\times
\mathcal{B}_{\kappa+2}^{0}) & \longrightarrow & (\mathcal{U}_{\alpha,\kappa
+4}^{2}\times \mathcal{U}_{\alpha+2,\kappa+2}^{2})\times(\mathcal{B}_{\kappa
+3}^{0}\times \mathcal{B}_{\kappa+3}^{0})\, \\[10pt]
&
\begin{pmatrix}
\hat{x}=(\hat{\gamma},\hat{w})\\[4pt]%
\hat{x}^{\ast}=(\hat{\gamma}^{\ast},\hat{w}^{\ast})
\end{pmatrix}
& \longmapsto &
\begin{pmatrix}
\mathcal{S}_{\mu}(\mathcal{NL}(\hat{x},\hat{x}),\hat{x}^{\ast})-\hat{x}\\[4pt]%
\Gamma_{1}[\mathcal{S}_{\mu}(\mathcal{NL}(\hat{x},\hat{x}),\hat{x}^{\ast})]
\end{pmatrix}
\end{array}
\label{PhiMu}%
\end{equation}
By definition, if $(\hat{x}=(\hat{\gamma},\hat{w}),\hat{x}^{\ast}=(\hat
{\gamma}^{\ast},\hat{w}^{\ast}))$ is a solution to $\Phi_{\mu}(\hat{x},\hat
{x}^{\ast})=(0,(\hat{v}_{r}^{\ast},\hat{v}_{\theta}^{\ast}))$, then
$(\hat{\gamma},\hat{w})$ satisfies
{(\ref{Fourier})-\eqref{eq_bcFn}}. This motivates the following notion of
$\kappa$--solutions:

\begin{definition}
Given an exponent $\kappa>0$, an angular velocity {$\mu>\mu_{crit}$}, and a
boundary condition $\hat{\mathbf{v}}^{\ast}:=(\hat{v}_{r}^{\ast},\hat
{v}_{\theta}^{\ast})\in \mathcal{B}_{\kappa+3}^{0}\times \mathcal{B}_{\kappa
+3}^{0}$, we call {\emph{$\kappa$--solution for the boundary condition
$\hat{\mathbf{v}}^{\ast}$ and the asymptotic angular velocity $\mu$}}\ a pair
$\hat{x}=(\hat{\gamma},\hat{w})$, such that, for sufficiently small $\alpha>0$
and some $\hat{x}^{\ast}=(\hat{\gamma}^{\ast},\hat{w}^{\ast})\in
\mathcal{B}_{\kappa+4}^{0}\times \mathcal{B}_{\kappa+2}^{0}$:

\begin{itemize}
\item $\hat{x}\in \mathcal{U}_{\alpha+4,\kappa}^{2}\times \mathcal{U}%
_{\alpha+2,\kappa+2}^{2},$

\item $\Phi_{\mu}(\hat{x},\hat{x}^{*}) = (0,\hat{\mathbf{v}}^{*}).$
\end{itemize}
\end{definition}

The remaining sections are devoted to the proof of the following result:

\begin{theorem}
\label{thm_main_tech} Given $\kappa>0$ and {$\mu_{0}>\mu_{crit}$} there exists
$\varepsilon_{\kappa,\mu_{0}}>0$ and an open interval $I_{\kappa,\mu_{0}}%
\ni \mu_{0}$ such that, given $\hat{\mathbf{v}}^{\ast}\in B(\mathcal{B}%
_{\kappa+3}^{0}\times \mathcal{B}_{\kappa+3}^{0}\ ;\  \varepsilon_{\kappa
,\mu_{0}})$ and $\mu^{\ast}\in$ ${I_{\kappa,\mu_{0}}}$, there exists {$\mu
>\mu_{crit}$} and a $\kappa$--solution $(\hat{\gamma},\hat{w})$ for the
boundary condition $\hat{\mathbf{v}}^{\ast}$ and the asymptotic angular
velocity $\mu$, satisfying the condition $\mu - \partial_{r}\gamma_{0}(1)
=\mu^{\ast}$.
\end{theorem}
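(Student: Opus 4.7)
The proof proceeds by a single application of the Banach-space inverse function theorem to an augmented map that encodes simultaneously the nonlinear fixed-point equation for $\hat{x}$, the trace-matching condition against $\hat{\mathbf{v}}^{*}$, and the zero-mode constraint relating $\mu$ to $\mu^{*}$. Setting
$X=\mathcal{U}_{\alpha,\kappa+4}^{2}\times\mathcal{U}_{\alpha+2,\kappa+2}^{2}$,
$Y=\mathcal{B}_{\kappa+4}^{0}\times\mathcal{B}_{\kappa+2}^{0}$,
$Z=\mathcal{B}_{\kappa+3}^{0}\times\mathcal{B}_{\kappa+3}^{0}$, and taking an open interval $J\subset(\mu_{\mathrm{crit}},\infty)$ around $\mu_{0}$, I define
\[
\Theta\colon J\times X\times Y\longrightarrow\mathbb{R}\times X\times Z,\qquad
\Theta(\mu,\hat{x},\hat{x}^{*})=\Bigl(\mu+\textstyle\int_{1}^{\infty}t\,w_{0}(t)\,\mathrm{d}t,\;\Phi_{\mu}(\hat{x},\hat{x}^{*})\Bigr),
\]
where $w_{0}$ denotes the zeroth Fourier mode of the $\hat{w}$-component of $\hat{x}$. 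By \eqref{eq_w0gamm0} the first entry equals $\mu-\partial_{r}\gamma_{0}(1)$, so solving $\Theta(\mu,\hat{x},\hat{x}^{*})=(\mu^{*},0,\hat{\mathbf{v}}^{*})$ produces exactly the $\kappa$-solution required by the theorem.

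Smoothness of $\Theta$ follows from Lemmas \ref{lem_S} and \ref{lem_NL}: $\Phi_{\mu}$ is polynomial in $(\hat{x},\hat{x}^{*})$ and depends smoothly on $\mu$ through $\zeta_{n}=\sqrt{n^{2}+i\mu n}$. Since $\Theta(\mu,0,0)=(\mu,0,0)$ and bilinearity of $\mathcal{NL}$ kills its derivative at the origin, one computes
\[
D\Theta(\mu_{0},0,0)(\delta\mu,\hat{h},\hat{h}^{*})
=\Bigl(\delta\mu+\textstyle\int_{1}^{\infty}t\,h_{0}^{w}(t)\,\mathrm{d}t,\;-\hat{h}+\mathcal{S}_{\mu_{0}}(0,\hat{h}^{*}),\;\Gamma_{1}\mathcal{S}_{\mu_{0}}(0,\hat{h}^{*})\Bigr),
\]
with $h_{0}^{w}$ the zero mode of the $\hat{w}$-part of $\hat{h}$. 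This is block upper triangular with diagonal blocks $\mathrm{Id}_{\mathbb{R}}$, $-\mathrm{Id}_{X}$ and $L_{\mu_{0}}:=\Gamma_{1}\circ\mathcal{S}_{\mu_{0}}(0,\cdot)\colon Y\to Z$, so its invertibility reduces to showing that $L_{\mu_{0}}$ is a topological isomorphism.

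I would establish this mode by mode. The $\mathcal{B}^{0}$ constraint on both sides trivialises the $n=0$ block. For $n\neq0$, the formulas \eqref{eq_wngamman}--\eqref{eq_wngammanb} with $F_{n}\equiv0$ reduce to $w_{n}(r)=w_{n}^{*}/r^{\zeta_{n}}$, and a direct integration at $r=1$ gives $\partial_{r}\gamma_{n}(1)=-|n|\gamma_{n}^{*}+w_{n}^{*}/(|n|+\zeta_{n}-2)$, so that
\[
L_{\mu}^{(n)}=\begin{pmatrix}in & 0\\[2pt]|n| & -(|n|+\zeta_{n}-2)^{-1}\end{pmatrix},\qquad\det L_{\mu}^{(n)}=\frac{-in}{|n|+\zeta_{n}-2}\neq 0.
\]
Since $\mathrm{Re}(\zeta_{n})>0$ for all $n\neq 0$ and $|\zeta_{n}|$ grows like $|n|$, the quantity $|n|+\zeta_{n}-2$ is uniformly bounded away from zero and grows like $|n|$, so each $(L_{\mu_{0}}^{(n)})^{-1}$ is explicit and bounded; the weight shifts between $Y$ and $Z$ (namely $\kappa+4,\kappa+2$ versus $\kappa+3,\kappa+3$) are precisely the ones needed to absorb the factors $1/|n|$ and $|n|+\zeta_{n}-2$ that appear in the inverse matrix, yielding continuity of $L_{\mu_{0}}^{-1}\colon Z\to Y$.

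Granted that $D\Theta(\mu_{0},0,0)$ is a Banach-space isomorphism, the smooth inverse function theorem supplies open neighbourhoods of $(\mu_{0},0,0)$ on which $\Theta$ is a $C^{\infty}$ diffeomorphism. Choosing $I_{\kappa,\mu_{0}}\ni\mu_{0}$ and $\varepsilon_{\kappa,\mu_{0}}>0$ so that $I_{\kappa,\mu_{0}}\times\{0\}\times B_{Z}(0;\varepsilon_{\kappa,\mu_{0}})$ lies inside the image neighbourhood, and shrinking so that the corresponding preimage of $\mu$ remains in $J\subset(\mu_{\mathrm{crit}},\infty)$, yields the claimed $\mu$ together with a $\kappa$-solution for every admissible $(\mu^{*},\hat{\mathbf{v}}^{*})$. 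The main obstacle is precisely the explicit inversion of $L_{\mu_{0}}$ mode by mode and the verification that the weighted norms on $Y$ and $Z$ have been aligned so that the inverse is bounded uniformly in $n$; the remainder is routine smooth calculus on Banach spaces combined with the lemmas already established.
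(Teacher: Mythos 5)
Your argument takes a genuinely different route from the paper. The paper proceeds in two steps: it first applies the inverse function theorem uniformly over a compact interval $I=[\mu_-,\mu_+]$ of asymptotic angular velocities (via Proposition \ref{prop_abstractresult}), producing a continuously parametrized family $\Psi_\mu(\hat{\mathbf{v}}^*)$ of $\kappa$--solutions, and then invokes the intermediate value theorem on the scalar continuous map $\mu \mapsto \mu - \partial_r\psi_{\mu,0}(1)$ to prescribe the value $\mu^*$. You instead collapse the two steps into a single inverse function theorem applied to the augmented map $\Theta(\mu,\hat{x},\hat{x}^*)$ that adjoins $\mu$ to the unknowns and the zero-mode constraint to the equations. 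Your computation of $D\Theta(\mu_0,0,0)$ and its block upper-triangular structure is correct (in particular $\Phi_\mu(0,0)\equiv 0$ kills the $\partial_\mu\Phi$ block), the identification $\mu-\partial_r\gamma_0(1)=\mu+\int_1^\infty t\,w_0(t)\,\mathrm{d}t$ via \eqref{eq_w0gamm0} is exactly right at fixed points, the mode-by-mode matrix $L_{\mu}^{(n)}$ and its determinant match what the paper computes in the proof of the second item of Lemma \ref{lem_Smu}, and your accounting of the weight shifts $(\kappa+4,\kappa+2)\leftrightarrow(\kappa+3,\kappa+3)$ needed to make $L_{\mu_0}^{-1}$ bounded is the same as in the paper. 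Your approach also delivers local uniqueness in $(\mu,\hat{x},\hat{x}^*)$, which the paper's IVT step does not.

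The one genuine gap is regularity in $\mu$. Your scheme needs $\Theta$ to be $C^1$ jointly in $(\mu,\hat{x},\hat{x}^*)$, hence in particular $\mu\mapsto\mathcal{S}_\mu$ to be $C^1$ from $I$ into the relevant operator space. The paper only establishes continuity in $\mu$ (Lemma \ref{lem_Smu}, first item) and deliberately avoids needing more: Proposition \ref{prop_abstractresult} requires only $\Phi\in C(I;C^1(X;Y))$, not differentiability in $\mu$, and the final step is an intermediate-value argument rather than an implicit-function one. You assert "$\Phi_\mu\ldots$ depends smoothly on $\mu$ through $\zeta_n=\sqrt{n^2+i\mu n}$," but differentiating $r^{\pm\zeta_n[\mu]}$ in $\mu$ introduces a factor $\partial_\mu\zeta_n\cdot\ln r$, and you would have to redo the estimates of Proposition \ref{prop_S_w} to absorb the logarithm using the slack $\alpha<\alpha_{\mu_0}$ (exactly the kind of manipulation already carried out in the paper's continuity proof, where $\ln r\cdot r^{\alpha/2}$ is traded against the remaining decay). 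This is certainly feasible by the same techniques, but it is not a consequence of Lemmas \ref{lem_S}, \ref{lem_NL} or \ref{lem_Smu} as stated, and should be proved explicitly rather than asserted. If you do not wish to establish $C^1$-dependence on $\mu$, you should fall back on the paper's two-step argument: a uniform family of solutions plus the intermediate value theorem, which only uses the continuity that Lemma \ref{lem_Smu} actually provides.
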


As mentioned above, the notion of {$\alpha$} small enough will be made precise
in the last section. Before entering into the details of the proof of 
Theorem \ref{thm_main_tech}, we explain why it implies Theorem \ref{thm_main}.

\medskip

\begin{proof}
[Proof of {Theorem \ref{thm_main}}]Let {$\mu_{0}>\mu_{crit}$} and {$\kappa>1$%
}. Applying Theorem \ref{thm_main_tech} yields a ball of initial conditions
with positive radius $\varepsilon_{\kappa,\mu_{0}}>0$ and an open neighborhood
$I_{\kappa,\mu_{0}}$ of $\mu_{0}$.

\medskip

For $\mathbf{u}^{\ast}\in C^{\infty}(\partial B),$ we define : 
$$
\mu^{\ast} = \dfrac{1}{2\pi}\int_{0}^{2\pi}u_{\theta}^{\ast}(s)\,
\mathrm{d}\text{$s$}\,,
$$
and the sequences $\hat{u}^{\ast}$ and $\hat{v}^{\ast}$ by $u_0=v_0 =0$ and:
$$
u_{n}^{\ast}  =   \dfrac{1}{2\pi}\int_{0}^{2\pi}u_{r}^{\ast}(\theta)e^{-in\theta
}\text{ \textrm{d}$\theta$}\,,\qquad v_{n}^{\ast}= \dfrac{1}{2\pi}\int
_{0}^{2\pi}u_{\theta}(\theta)e^{-in\theta}\text{ \textrm{d}$\theta$}%
\,,\qquad \forall \,n\in \mathbb{Z}\setminus \{0\} \,.
$$
The
regularity of $\mathbf{u}^{\ast}$ yields that $(\hat{u}^{\ast},\hat{v}^{\ast
})\in \mathcal{B}_{\kappa+3}^{0}\times \mathcal{B}_{\kappa+3}^{0}$ and 
\[
u_{r}^{\ast}(\theta)= \sum_{n\in \mathbb{Z}}u_{n}^{\ast}e^{in\theta}\,,\qquad
u_{\theta}^{\ast}(\theta)=\mu^{\ast} + \sum_{n\in \mathbb{Z}}v_{n}^{\ast
}e^{in\theta}\,.
\] 
We now assume that:
\begin{equation}
\Vert(\hat{u}^{\ast},\hat{v}^{\ast})\ ;\  \mathcal{B}_{\kappa+3}^{0}%
\times \mathcal{B}_{\kappa+3}^{0}\Vert<\varepsilon_{\kappa,\mu_{0}}\,,\quad
\mu^{\ast}\in I_{\kappa,\mu_{0}}\,, \label{eq_2}%
\end{equation}
which makes the meaning of $\mathbf{u}^{\ast}$ sufficiently close to $\mu
_{0}\mathbf{e}_{\theta}$ in the statement of Theorem \ref{thm_main} precise.

\medskip

Consequently, the assumptions of Theorem \ref{thm_main_tech} are satisfied,
which yields that there exists {$\mu>\mu_{crit}$} and a $\kappa$--solution $(\hat{\gamma}%
,\hat{w})$ for the boundary data {$(\hat{u}^{\ast},\hat{v}^{\ast})$}
satisfying $\mu - \partial_{r}\gamma_{0}(1)=\mu^{\ast}$. Let
\[
w(r,\theta)=\sum_{n\in \mathbb{Z}}w_{n}(r)e^{in\theta}\,,\qquad \gamma
(r,\theta)=\sum_{n\in \mathbb{Z}}\gamma_{n}(r)e^{in\theta}\,,\quad
\forall{\,(r,\theta)\in \Omega \,.}%
\]
Because $\kappa>1$, classical results from the theory of Fourier series yield that:

\begin{itemize}
\item $w\in C^{2}(\mathbb{R}^{2}\setminus \overline{B})$ and
$\gamma \in C^{4}(\mathbb{R}^{2}\setminus \overline{B})$ so that
$\mathbf{v}=\nabla^{\bot}\gamma \in C^{3}(\mathbb{R}^{2}%
\setminus \overline{B})$,

\item $\mathbf{v}\cdot \nabla w\in C^{1}(\mathbb{R}^{2}\setminus
\overline{B})$ with:%
\[
\mathbf{v}\cdot \nabla w(r,\theta)=\sum_{n\in \mathbb{Z}}\left[  - \frac{i}{r}{\sum_{k+l=n}%
}\left(  l\,w_{l}(r)\, \partial_{r}\gamma_{k}(r)-k\, \gamma_{k}(r)\, \partial_{r}%
w_{l}(r)\right)  \right]  e^{in\theta}\,, \quad \forall \, (r,\theta) \in \Omega\,.
\]
\end{itemize}

Let
\[
\Delta{w}-{\dfrac{\mu}{r}}\  \mathbf{e}_{\theta}\cdot \nabla w-\mathbf{v}%
\cdot \nabla w=\colon \varphi \in{C}([1,\infty);{C}([-\pi,\pi]))\,.
\]
Because $(\hat{\gamma},\hat{w})$ satisfies (\ref{Fourier}), all the Fourier
coefficients of $\varphi$ vanish identically on $[1,\infty)$. Hence,
$(\gamma,w)$ is a solution of
\begin{equation}
\left \{
\begin{array}
[c]{rcl}%
\Delta \gamma & = & - w\,,\\
\Delta{w}-{\dfrac{\mu}{r}}\  \mathbf{e}_{\theta}\cdot \nabla w & = &
\mathbf{v}\cdot \nabla w\,,
\end{array}
\right.  \quad \text{in $\mathbb{R}^{2}\setminus \overline{B}\,.$} \label{eq_3}%
\end{equation}
Straightforward manipulations of the Fourier series of $\gamma$ yield that:
\begin{equation}
|\mathbf{v}(r,\theta)|\leq \dfrac{\Vert \gamma \ ;\  \mathcal{U}_{\alpha,\kappa
+4}^{2}\Vert}{r^{\alpha+1}}\,,\quad \forall \,{(r,\theta)\in \Omega}\,.
\label{eq_decayv}%
\end{equation}
Since $\mathbf{u}^{\ast}$ has zero flux,\textit{ i.e.}, since
$v^*_{r}$ has zero average, we have that $\mathbf{v}(1,\theta)=v_{r}^{\ast
}(\theta)e_{r}+v_{\theta}^{\ast}(\theta)\mathbf{e}_{\theta},$ for all
$\theta \in (-\pi,\pi)$, where:
\begin{align*}
v_{r}^{\ast}(\theta)  &  = \sum_{n\in \mathbb{Z}}in\gamma_{n}(1)e^{in\theta
}= \sum_{n\in \mathbb{Z}}u_{n}^{\ast}e^{in\theta}=u_{r}^{\ast}(\theta)\,,\\
v_{\theta}^{\ast}(1,\theta)  &  = - \sum_{n\in \mathbb{Z}}\partial_{r}\gamma
_{n}(1)e^{in\theta}=u_{\theta}^{\ast}(\theta) - \partial_{r}\gamma_{0}%
(1)-\mu^{\ast}\,.
\end{align*}
Therefore, if we set $\psi:=\psi_{\mu}+\gamma$, $\omega:=w$, $\mathbf{u}%
:=\nabla^{\bot}\psi=\mathbf{u}_{\mu}+\mathbf{v}$, then the pair $(\psi
,\omega)$ is a solution to
\begin{equation}
\left \{
\begin{array}
[c]{rcl}%
\Delta \psi & = & -\omega \,,\\
\Delta \omega & = & \mathbf{u}\cdot \nabla \omega \,,
\end{array}
\right.  \quad \text{in $\mathbb{R}^{2}\setminus \overline{B}$}\,, \label{eq_4}%
\end{equation}
and the following boundary conditions are satisfied (recall that $\mu - \partial_r \gamma_0(1)=\mu^{\ast}$ by construction of $\hat{\gamma}$):
\[
\mathbf{u}=\mathbf{u}^{\ast}\,,\quad \text{on $\partial B\,,$}\qquad
\lim_{r\rightarrow \infty}|\mathbf{u}(r,\theta)|=0\,.
\]
The inequality \eqref{eq_decayv} implies that the boundary
condition at infinity is satisfied in the following more precise sense:
\[
\lim_{r\rightarrow \infty}\left \Vert r\left(  \mathbf{u}(r,\theta)-\dfrac
{\mu \mathbf{e}_{\theta}}{r}\right)  ;{L^{\infty}((-\pi,\pi))}\right \Vert
=\lim_{r\rightarrow \infty}r\Vert v(r,\theta);{L^{\infty}((-\pi,\pi))}%
\Vert=0\,.
\]

\medskip

To complete the proof, we need to show how to obtain the Navier Stokes
equations \eqref{NS} from the relations between $\mathbf{u}$, $\psi$ and
$\omega$, together with \eqref{eq_4}. First, multiplying (\ref{eq_4}) by
$\varphi \in C_{c}^{\infty}(\mathbb{R}^{2}\setminus \overline{B})$
yields:
\[
{-\int_{\mathbb{R}^{2}\setminus \overline{B}}\nabla w\colon \nabla \varphi}%
=\int_{\mathbb{R}^{2}\setminus \overline{B}}\left[  \mathbf{u}\cdot \nabla
w\right]  \varphi \,.
\]
We have the basic identities:
\begin{equation}
w=\nabla \times\mathbf{u}\,,\qquad \mathbf{u}\cdot \nabla
w=\nabla \times\left[  \mathbf{u}\cdot \nabla \mathbf{u}\right]  \,,
\label{eq_identities}%
\end{equation}
from which we obtain, after integration by parts, that for any given
$\varphi \in C_{c}^{\infty}(\mathbb{R}^{2}\setminus \overline{B})$:
\begin{equation}
{-\int_{\mathbb{R}^{2}\setminus \overline{B}}\nabla \mathbf{u}\colon \nabla
\nabla^{\bot}\varphi}=\int_{\mathbb{R}^{2}\setminus \overline{B}}\left[
\mathbf{u}\cdot \nabla \mathbf{u}\right]  \cdot \nabla^{\bot}\varphi \,.
\label{eq_0}%
\end{equation}
This identity yields the pressure $p$ \textit{via} De Rham's theory, modulo
the difficulty, that not all the divergence-free velocity-fields
$\mathbf{w}\in C_{c}^{\infty}(\mathbb{R}^{2}\setminus
\overline{B})$ of compact support can be written in the form
$\nabla^{\bot}\varphi$ with $\varphi \in C_{c}^{\infty}(\mathbb{R}%
^{2}\setminus \overline{B})$. More precisely, if a smooth velocity-field
$\mathbf{v}$ satisfies $\nabla \times\mathbf{v}=0$ in $\mathbb{R}%
^{2}\setminus \overline{B}$, then $\mathbf{v}$ is the gradient of a 
function up to a contribution of the form $C\mathbf{x}^{\bot
}/|\mathbf{x}|^{2}$. We now show that this contribution vanishes in our case.

\medskip

Let $\Phi_{0}\in C^{\infty}(\mathbb{R})$ be such that $\mathrm{supp}%
(\Phi_{0}^{\prime})\subset \subset(1,2)$ and $\Phi_0(0) = 0,$ $\Phi_0(2)=1,$
 and let $\mathbf{w}_{0}%
=\nabla^{\bot}\Phi_{0}$. In polar coordinates we have $\mathbf{w}_{0}%
(r,\theta)=-\Phi_{0}^{\prime}(r)\mathbf{e}_{\theta}$, for all $(r,\theta
)\in \Omega$. Given a divergence-free $\mathbf{w}\in 
C_{c}^{\infty}(\mathbb{R}^{2}\setminus \overline{B})$ we
define $\mathbf{\widetilde{w}}$ and $\varphi$ by:
\[
\mathbf{\widetilde{w}}=\mathbf{w}-M_{\mathbf{w}}\mathbf{w}_{0}\quad \text{with}%
\quad \left[  \int_{1}^{\infty}w_{\theta}(r,0)\, \mathrm{d}r\right]  =\colon
M_{\mathbf{w}},\qquad \varphi(r,\theta)=\int_{0}^{r}{w}_{\theta}(s,\theta
)\text{$\, \mathrm{d}s$}-M_{\mathbf{w}}\Phi_{0}(r)\,.
\]
By definition of $M_{\mathbf{w}}$, we have that $\varphi \in C%
_{c}^{\infty}(\mathbb{R}^{2}\setminus \overline{B})$ and $\nabla^{\bot}%
\varphi=\mathbf{\widetilde{w}}$, so that \eqref{eq_0} implies:
\[
{-\int_{\mathbb{R}^{2}\setminus \overline{B}}\nabla \mathbf{u}\colon \nabla
\widetilde{\mathbf{w}}}=\int_{\mathbb{R}^{2}\setminus \overline{B}}\left[
\mathbf{u}\cdot \nabla \mathbf{u}\right]  \cdot \widetilde{\mathbf{w}}\,.
\]
Replacing $\widetilde{\mathbf{w}}$ by its definition yields that, for any
divergence-free $\mathbf{w}\in  C_{c}^{\infty}(\mathbb{R}%
^{2}\setminus \overline{B})$, we have:
\[
{-\int_{\mathbb{R}^{2}\setminus \overline{B}}\nabla \mathbf{u}\colon
\nabla{\mathbf{w}}}=\int_{\mathbb{R}^{2}\setminus \overline{B}}\left[
\mathbf{u}\cdot \nabla \mathbf{u}\right]  \cdot{\mathbf{w}-}M_{\mathbf{w}}%
\int_{\mathbb{R}^{2}\setminus \overline{B}}\left(  \left[  \mathbf{u}%
\cdot \nabla \mathbf{u}\right]  \cdot{\mathbf{w}_{0}+\nabla \mathbf{u}%
\colon \nabla{\mathbf{w}_{0}}}\right)  \,.
\]
Let  $I_{0}$ be the last integral in the previous equality. We then have:
\[
I_{0}=\lim_{N\rightarrow \infty}\int_{B(\mathbb{R}^{2},N)\setminus \overline{B}%
}\left(  \left[  \mathbf{u}\cdot \nabla \mathbf{u}\right]  \cdot{\mathbf{w}%
_{0}+\nabla \mathbf{u}\colon \nabla{\mathbf{w}_{0}}}\right)  \,.
\]
Integrating by parts, we obtain, for all $N>1$:
\begin{align*}
\int_{B(\mathbb{R}^{2},N)\setminus \overline{B}}\left(  \left[  \mathbf{u}%
\cdot \nabla \mathbf{u}\right]  \cdot{\mathbf{w}_{0}+\nabla \mathbf{u}%
\colon \nabla{\mathbf{w}_{0}}}\right)   &  =\int_{\partial B(\mathbb{R}^{2}%
,N)}\left(  \left[  \Phi_{0}\mathbf{u}\cdot \nabla \mathbf{u}\right]
\cdot \mathbf{n}^{\bot}{+{\Phi_{0}^{\prime}}\partial_{r}\mathbf{u}%
\cdot \mathbf{n}^{\bot}}\right)  \text{ \textrm{d}$\sigma$}\\
&  {-\int_{B(\mathbb{R}^{2},N)}\left(  \Phi_{0}\mathbf{u}\cdot \nabla
\omega+\nabla \omega \cdot \nabla \Phi_{0}\right)  }\\
&  =\int_{\partial B(\mathbb{R}^{2},N)}\left(  \Phi_{0}\left[  {(\mathbf{u}%
\cdot \nabla \mathbf{u})\cdot \mathbf{n}^{\bot}-\partial_{\mathbf n}\omega}\right]
{+{\Phi_{0}^{\prime}}}\partial_{r}\mathbf{u}\cdot \mathbf{n}^{\bot}\right)
\text{ \textrm{d}$\sigma$}\,,
\end{align*}
where, in order to get the last identity, we have again used that
$\mathbf{u}\cdot \nabla \omega=\Delta \omega$, in $\mathbb{R}^{2}\setminus
\overline{B}$. Since $\mathbf{u}$ decays like $1/r$, $\nabla \mathbf{u}$ decays
like $1/r^{2}$, and $\nabla \omega$ like $1/r^{3}$. This yields that $I_{0}=0$ in
the limit $N\rightarrow \infty$. Finally, we have:
\[
{-\int_{\mathbb{R}^{2}\setminus \overline{B}}\nabla \mathbf{u}\colon
\nabla{\mathbf{w}}}=\int_{\mathbb{R}^{2}\setminus \overline{B}}\left[
\mathbf{u}\cdot \nabla \mathbf{u}\right]  \cdot{\mathbf{w}}\,,
\]
for any divergence-free vector-field $\mathbf{w}\in  C%
_{c}^{\infty}(\mathbb{R}^{2}\setminus \overline{B})$, and De
Rham's theory (see \cite[Remark 1.5]{TemamB}) implies the existence of a
pressure $p$ such that {\eqref{NS} }is satisfied.
\end{proof}

\section{Proof of \textbf{Theorem \ref{thm_main_tech}}}

In this section $\kappa>0$ and {$\mu_{0}>\mu_{crit}$} are fixed. First, we set
$\mu_{-}=\left(  \mu_{0}+{\mu_{crit}}\right)  /2$ and $\mu_{+}=(2\mu_{0}+$
${\mu_{crit}})/2$ so that $I:=[\mu_{-},\mu_{+}]$ satisfies%
\[
\mu_{0}\in \lbrack \mu_{-},\mu_{+}]\subset \left(  {\mu_{crit}},\infty \right)
\,.
\]
We also set:
\begin{equation} \label{eq_alpha}
\alpha:=\dfrac{1}{4}\min \left(  \frac{1}{\sqrt{2}}\left[  \sqrt{ 1+{|\mu_{-}|%
}^{2}  }+1\right]  ^{{1}/{2}}-2,1\right)  \,.
\end{equation}
We emphasize that, because ${\mu_{0}>\mu_{crit}}$, we have $\alpha\in
(0,1/4]$. Let:
\begin{equation}%
\begin{array}
[c]{rrcl}%
\Phi_{\mu}\colon & (\mathcal{U}_{\alpha,\kappa+4}^{2}\times \mathcal{U}%
_{\alpha+2,\kappa+2}^{2})\times(\mathcal{B}_{\kappa+4}^{0}\times
\mathcal{B}_{\kappa+2}^{0}) & \longrightarrow & (\mathcal{U}_{\alpha
,\kappa+4}^{2}\times \mathcal{U}_{\alpha+2,\kappa+2}^{2})\times
(\mathcal{B}_{\kappa+3}^{0}\times \mathcal{B}_{\kappa+3}^{0})\, \\[10pt]
&
\begin{pmatrix}
\hat{x}=(\hat{\gamma},\hat{w})\\[4pt]%
\hat{x}^{\ast}=(\hat{\gamma}^{\ast},\hat{w}^{\ast})
\end{pmatrix}
& \longmapsto &
\begin{pmatrix}
\mathcal{S}_{\mu}(\mathcal{NL}(\hat{x},\hat{x}),\hat{x}^{\ast})-\hat{x}\\[4pt]%
\Gamma_{1}[\mathcal{S}_{\mu}(\mathcal{NL}(\hat{x},\hat{x}),\hat{x}^{\ast})]
\end{pmatrix}
\end{array}
\label{mapPHI}%
\end{equation}
We will show in the next section that for $\mu \in I$ the map $\Phi_{\mu}$ is
well defined. We split the proof of Theorem~\ref{thm_main_tech} into two
steps. First, we show that we can construct a $\kappa$--solution for any
sufficiently small boundary condition $(\hat{u}^{\ast},\hat{v}^{\ast}%
)\in \mathcal{B}_{\kappa+3}^{0}\times \mathcal{B}_{\kappa+3}^{0}$ and an
interval of asymptotic  angular velocities $\mu$. In a second step, we analyze the
dependence of this solution as a function of $\mu$.

\medskip

We have the following abstract result:

\begin{proposition}
\label{prop_abstractresult} Let $I$ be a compact interval and $X$, $Y$ two
Banach spaces. Assume that $\Phi \colon \mu \in I\mapsto \Phi_{\mu}$ satisfies:

\begin{itemize}
\item $\Phi \in{C}( I ; {C}^{1}(X ; Y))$,

\item $\Phi_{\mu}(x) = 0$ for all $\mu \in I$,

\item $D\Phi_{\mu}(0)$ is one-to-one and onto, and has a continuous inverse for all
$\mu \in I.$
\end{itemize}

\noindent Then, there exist positive constants $\eta_{x}$ and $\eta_{y}$, such
that $\Phi_{\mu}$ is a $C^{1}$-diffeomorphism from $B_{X}(0,\eta_{x})$ onto
$\Phi_{\mu}\left(  B_{X}(0,\eta_{x})\right)  \supset B_{Y}(0,\eta_{y})$.
Furthermore, the family of inverse maps $\Phi^{-1}\colon \mu \in I\mapsto \left(
\Phi_{\mu}\right)  ^{-1}$ satisfies
\[
\Phi^{-1}\in C(I;C^{1}(B_{Y}(0,\eta_{y});B_{X}(0,\eta_{x})))\,.
\]

\end{proposition}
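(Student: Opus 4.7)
The plan is to prove a parametrized version of the Banach-space inverse function theorem by running the standard contraction argument uniformly in $\mu\in I$. I first read the second hypothesis as $\Phi_\mu(0)=0$ (the displayed ``$\Phi_\mu(x)=0$ for all $\mu$'' is evidently a typo). Since $I$ is compact and $\mu\mapsto\Phi_\mu\in C^1(X;Y)$ is continuous, $\mu\mapsto L_\mu:=D\Phi_\mu(0)\in\mathcal{L}(X,Y)$ is continuous, and its image is a compact subset of the (open, by Neumann series) set of topological isomorphisms. Hence $\mu\mapsto L_\mu^{-1}$ is continuous and $M:=\sup_{\mu\in I}\|L_\mu^{-1}\|<\infty$.

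Next, I would solve $\Phi_\mu(x)=y$ by writing $\Phi_\mu(x)=L_\mu x+R_\mu(x)$ with $R_\mu(0)=0$ and $DR_\mu(0)=0$, turning the problem into the fixed-point equation
\[
x=T_{\mu,y}(x):=L_\mu^{-1}\bigl(y-R_\mu(x)\bigr).
\]
Joint uniform continuity of $(\mu,x)\mapsto D\Phi_\mu(x)$ on $I\times\overline{B_X(0,1)}$, which follows from continuity of $\mu\mapsto\Phi_\mu$ in $C^1$ together with compactness of $I$, yields $\eta_x>0$ for which $\|DR_\mu(x)\|\le 1/(2M)$ whenever $\mu\in I$ and $\|x\|\le\eta_x$. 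Setting $\eta_y:=\eta_x/(2M)$, a routine estimate shows $T_{\mu,y}$ is a $\tfrac12$-contraction of $\overline{B_X(0,\eta_x)}$ into itself whenever $\|y\|\le\eta_y$. The Banach fixed-point theorem then gives a unique local inverse $x(\mu,y)=\Phi_\mu^{-1}(y)$ and the surjectivity statement $\Phi_\mu(B_X(0,\eta_x))\supset B_Y(0,\eta_y)$.

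For $C^1$ regularity in $y$ at fixed $\mu$, I would shrink $\eta_x$ if necessary (again using uniform continuity of $\mu\mapsto D\Phi_\mu$) so that $D\Phi_\mu(x)$ remains an isomorphism for all $\mu\in I$ and $\|x\|\le\eta_x$, with uniformly bounded inverse; then the classical argument (writing the difference quotient and applying the fixed-point estimate) shows $D\Phi_\mu^{-1}(y)=\bigl(D\Phi_\mu(\Phi_\mu^{-1}(y))\bigr)^{-1}$ and that $\Phi_\mu^{-1}\in C^1(B_Y(0,\eta_y);B_X(0,\eta_x))$.

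The most delicate step is the continuity of $\mu\mapsto\Phi_\mu^{-1}$ in the $C^1$-topology. For $\mu_1,\mu_2\in I$ and fixed $y$, subtracting the two fixed-point identities and using the uniform $\tfrac12$-contraction bound yields
\[
\|\Phi_{\mu_1}^{-1}(y)-\Phi_{\mu_2}^{-1}(y)\|\le 2\sup_{\|x\|\le\eta_x}\|\Phi_{\mu_1}(x)-\Phi_{\mu_2}(x)\|,
\]
which tends to $0$ by hypothesis and proves $C^0$-continuity of the inverse family. The corresponding convergence of derivatives follows from the formula $D\Phi_\mu^{-1}(y)=\bigl(D\Phi_\mu(\Phi_\mu^{-1}(y))\bigr)^{-1}$ together with continuity of $\mu\mapsto D\Phi_\mu$ in $C^0(\overline{B_X(0,\eta_x)};\mathcal{L}(X,Y))$ and the just-proved $C^0$-continuity of $\mu\mapsto\Phi_\mu^{-1}$. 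The only real obstacle is obtaining all the constants uniformly in $\mu$; once compactness of $I$ is used to secure this at the level of $L_\mu^{-1}$ and the modulus of continuity of $D\Phi_\mu$, the remainder of the proof is standard.
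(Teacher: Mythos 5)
Your proof is correct and reaches the same conclusion as the paper, but by a somewhat more self-contained route: where the paper invokes the inverse function theorem as a black box for each fixed $\mu$ and then appeals to compactness of $I$ (rather tersely) to make the radii $\eta_x,\eta_y$ uniform, you rerun the contraction argument explicitly with $\mu$ as a parameter, so that the uniform bound $M=\sup_\mu\|L_\mu^{-1}\|$ and the uniform modulus of continuity of $(\mu,x)\mapsto D\Phi_\mu(x)$ produce the uniform radii directly. This is arguably cleaner and makes the constants transparent. For the $C^0$-continuity in $\mu$, the paper instead expands $y-\tilde y=\Phi_\mu(x)-\Phi_{\tilde\mu}(\tilde x)$ around $D\Phi_\mu(0)$ and absorbs the error terms; your fixed-point subtraction achieves the same end. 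Both then get $C^1$-continuity in $\mu$ from the identity $D\Phi_\mu^{-1}(y)=\bigl(D\Phi_\mu(\Phi_\mu^{-1}(y))\bigr)^{-1}$. One small imprecision: in your displayed inequality the constant should be $2M$ rather than $2$ (or else you need an extra term accounting for $L_{\mu_1}^{-1}-L_{\mu_2}^{-1}$), since after the $\tfrac12$-contraction absorption you still have to compare $T_{\mu_1,y}(x_2)$ with $T_{\mu_2,y}(x_2)$, which involves both $\Phi_{\mu_1}-\Phi_{\mu_2}$ and the difference of the two $L_\mu^{-1}$; this does not affect the conclusion, as the right-hand side still tends to $0$ as $\mu_1\to\mu_2$. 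Your reading of the second hypothesis as $\Phi_\mu(0)=0$ is indeed the intended one.
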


\begin{proof}
The proof is standard, but for the sake of completeness we recall the main
ingredients. Given the hypothesis of Proposition \ref{prop_abstractresult},
the map $\Phi_{\mu}$ satisfies the assumptions of
the inverse function theorem for arbitrary $\mu \in I$, so that there exits $\eta_{x,\mu}>0$ and
$\eta_{y,\mu}>0$ such that $\Phi_{\mu}$ is a $C^{1}$-diffeomorphism from
$B_{X}(0,\eta_{x,\mu})$ onto $\Phi_{\mu}\left(  B_{X}(0,\eta_{x,\mu})\right)
\supset B_{Y}(0,\eta_{\mu,y})$. Since $\Phi$ is continuous, it is clear that
these constants can be chosen independently of $\mu$, locally in $\mu$. By a
compactness argument, we can therefore find constants $\eta_{x}>0$ and
$\eta_{y}>0$ such that $\Phi_{\mu}\colon B_{X}(0,\eta_{x})\rightarrow
\Phi_{\mu}\left(  B_{X}(0,\eta_{x})\right)  \supset B_{Y}(0,\eta_{y})$ is a
$C^{1}$ diffeomorphism for arbitrary $\mu \in I$. We now show that $\Phi
^{-1}\in C(I\times B_{Y}(0,\eta_{y}))$.  The proof that $\Phi^{-1}\in
C(I ; C^{1}(B_{Y}(0,\eta_{y});B_{X}(0,\eta_{x})))$ is then obtained by
differentiating  (with respect to $x$) the identity $\Phi^{-1}_{\mu} \circ \Phi_{\mu}(x) = x$ which
holds true on $B_{X}(0,\eta_{x}).$

Given $(\mu,\tilde{\mu})\in I^{2}$ and $(y,\tilde{y})\in \lbrack B_{Y}%
(0,\eta_{y})]^{2}$, we denote:
\[
x=\Phi_{\mu}^{-1}({y})\,,\qquad \tilde{x}=\Phi_{\tilde{\mu}}^{-1}({\tilde{y}})\,.
\]
By construction we have:
\begin{align*}
y-\tilde{y}  &  =\Phi_{\mu}(x)-\Phi_{\tilde{\mu}}(\tilde{x}),\\
&  =D\Phi_{\mu}(0)[x-\tilde{x}]+\Phi_{\mu}(\tilde{x})-\Phi_{\tilde{\mu}%
}(\tilde{x})+o(|x-\tilde{x}|)\\
&  =D\Phi_{\mu}(0)[x-\tilde{x}]+o(|\mu-\tilde{\mu}|)+o(\Vert x-\tilde
{x};X\Vert)\,.
\end{align*}
Consequently, reducing the size of $\eta_{x}$ and $\eta_{y}$ if necessary, we
get that:
\[
\Vert x-\tilde{x} \, ; \, X \Vert \leq2\Vert \lbrack D\Phi_{\mu}(0)]^{-1};\mathcal{L}%
_{c}(Y;X)\Vert \left[  \Vert y-\tilde{y};Y\Vert+o(|\mu-\tilde{\mu}|)\right]
\,,
\]
where $\mathcal{L}_c(Y;X)$ denotes the set of continuous linear map $Y \to X.$
This completes the proof.
\end{proof}

\medskip

We now show that we can apply Proposition \ref{prop_abstractresult} to the map
$\Phi$ as defined in
(\ref{mapPHI}) To this end, we remark that $\Phi_{\mu}$ depends on $\mu$ only
through $\mu \mapsto \mathcal{S}_{\mu}$, and that for all $\mu \in I$ the
differential $D\Phi_{\mu}(0)$ is:
\[
D\Phi_{\mu}(0)[\hat{x},\hat{x}^{\ast}]=(\Gamma_{1}\mathcal{S}_{\mu}(0,\hat
{x}^{\ast}),-\hat{x})\,,\quad \forall \,(\hat{x},\hat{x}^*)\in(\mathcal{U}%
_{\alpha,\kappa+4}^{2}\times \mathcal{U}_{\alpha+2,\kappa+2}^{2}%
)\times(\mathcal{B}_{\kappa+4}^{0}\times \mathcal{B}_{\kappa+2}^{0})\,.
\]
Let $\mathcal{T}_{\mu}(\hat{x}^*):=\Gamma_{1}%
\mathcal{S}_{\mu}(0,\hat{x}^*)$. Since $\Phi$ is a combination of linear and
bilinear maps, it suffices to apply Lemma \ref{lem_NL} and the
following lemma in order to check that the assumptions of Proposition
\ref{prop_abstractresult} are satisfied:

\begin{lemma}
\label{lem_Smu} Let $\alpha$ be given by \eqref{eq_alpha}, then the restriction of the map $\mathcal{S}\colon \mu
\mapsto \mathcal{S}_{\mu}$ to $I:=[\mu_{-},\mu_{+}]$ satisfies:

\begin{itemize}
\item[\textbf{i)}] $\mathcal{S}\in C(I;\mathcal{L}_{c}(\mathcal{B}%
_{\kappa,2\alpha+4}\times(\mathcal{B}_{\kappa+4}^{0}\times \mathcal{B}%
_{\kappa+2}^{0})\ ;\  \mathcal{U}_{\alpha,\kappa+4}^{2}\times
\mathcal{U}_{\alpha+2,\kappa+2}^{2}))$,

\item[\textbf{ii)}] $\mathcal{T}_{\mu}$ is a one to one and onto map $\mathcal{B}%
_{\kappa+4}^{0}\times \mathcal{B}_{\kappa+2}^{0}\rightarrow \mathcal{B}%
_{\kappa+3}^{0}\times \mathcal{B}_{\kappa+3}^{0}$ with continuous inverse.
\end{itemize}
\end{lemma}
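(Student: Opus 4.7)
The plan is to treat the two parts of Lemma \ref{lem_Smu} separately, with the bulk of the effort going into part (ii), where the explicit formulas of \eqref{eq_wngamman} collapse dramatically under the restriction $\hat{F}=0$. Indeed, with a vanishing source the equation for $\hat{w}$ reduces to $w_n(r) = w_n^{\ast} r^{-\zeta_n}$, and the inner integrals in the $\gamma_n$ formula can be computed by hand, giving an expression of the form
\[
\gamma_n(r) = \overline{\gamma}_n r^{-|n|} + \frac{w_n^{\ast}\, r^{2-\zeta_n}}{2|n|(\zeta_n+|n|-2)} + \frac{w_n^{\ast}(r^{2-\zeta_n}-r^{-|n|})}{2|n|(\zeta_n-|n|-2)}\,,
\]
with $\overline{\gamma}_n$ determined by the trace condition at $r=1$. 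My first step is therefore to carry out this computation and note that one must check neither $\zeta_n + |n| - 2$ nor $\zeta_n - |n| - 2$ vanishes for $n\neq 0$ and $\mu\in I$; the second never vanishes because $\zeta_n = |n|+2$ would force the imaginary quantity $i\mu n$ to equal the real quantity $4|n|+4$, while the first is harmless for $|n|\geq 2$ and for $|n|=1$ reduces to the inequality $\mathcal{R}e(\zeta_1) > 1$, which holds precisely because $\mu \geq \mu_- > \mu_{crit} > 0$ (and will in fact be uniform in $\mu \in I$ by compactness).

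Second, I would evaluate $\Gamma_1$ at $r=1$. A short calculation with the explicit formula above yields $\gamma_n(1) = \gamma_n^{\ast}$ and, after the telescoping cancellation
\[
\frac{|n|+2-\zeta_n}{\zeta_n+|n|-2} + 1 = \frac{2|n|}{\zeta_n+|n|-2}\,,
\]
the compact identity $\partial_r \gamma_n(1) = -|n|\gamma_n^{\ast} + w_n^{\ast}/(\zeta_n + |n| - 2)$. Hence $\mathcal{T}_\mu$ is block-diagonal in $n$, acting on each mode $n\neq 0$ by the matrix
\[
M_n = \begin{pmatrix} in & 0 \\[3pt] |n| & -\dfrac{1}{\zeta_n+|n|-2} \end{pmatrix}\,,\qquad \det M_n = -\frac{in}{\zeta_n+|n|-2}\,,
\]
which is invertible by the previous paragraph. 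Explicit inversion of $M_n$ and the asymptotic $|\zeta_n + |n| - 2| \sim 2|n|$ as $|n| \to \infty$ (with uniform upper and lower bounds in $\mu\in I$) then readily give the mapping properties: the extra factor $|n|$ lost going from $v_r^{\ast}$ to $\gamma_n^{\ast} = v_{r,n}^{\ast}/(in)$ accounts for the weight gain $\kappa+3 \to \kappa+4$, while the extra factor $|n|$ gained in the $w_n^{\ast}$ component accounts for the loss $\kappa+3 \to \kappa+2$. Continuity of $\mathcal{T}_\mu^{-1}$ is immediate once the uniform (in $n$) bounds on $M_n^{-1}$ are in place.

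For part (i), the map $\mu\mapsto \mathcal{S}_\mu$ enters only through $\mu\mapsto \zeta_n(\mu) = \sqrt{n^2 + i\mu n}$, which is real-analytic on $I$ (since $\mu\neq 0$ keeps the argument away from $(-\infty,0]$). The proof of Lemma~\ref{lem_S} (deferred to Section \ref{sec_technical}) controls $\mathcal{S}_\mu$ by integral kernels of the form $(r/s)^{\zeta_n}$ and $(s/r)^{\zeta_n}$; repeating those estimates with $\zeta_n$ replaced by $\zeta_n(\mu)-\zeta_n(\mu')$ in the exponent, and using that $\mathcal{R}e(\zeta_n(\mu))$ is bounded below uniformly in $n\neq 0$ and $\mu\in I$ (this is the very inequality used in \eqref{eq_alpha} to define $\alpha$), gives a bound of the form $\|\mathcal{S}_\mu - \mathcal{S}_{\mu'}\|_{\mathrm{op}} \leq C|\mu-\mu'|$, whence the continuity in operator norm follows.

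The main obstacle in the whole argument is really the low-mode case, especially $n=\pm 1$, both for the non-vanishing of $\det M_n$ (which requires $\mathcal{R}e(\zeta_1)>1$) and for the uniform control needed in part (i): the constants degenerate as $\mu\downarrow \mu_{crit}$, and it is only the compactness of $I\subset (\mu_{crit},\infty)$ together with the quantitative choice of $\alpha$ in \eqref{eq_alpha} that allows every bound to be made uniform in $\mu$. Once this uniformity is secured, assembling the mode-wise matrix estimates into the norm of $\mathcal{B}_{\kappa+4}^0\times\mathcal{B}_{\kappa+2}^0 \leftrightarrow \mathcal{B}_{\kappa+3}^0\times\mathcal{B}_{\kappa+3}^0$ is bookkeeping.
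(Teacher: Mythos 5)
Your proposal is correct and follows essentially the same route as the paper: for part \textbf{ii)} you set $\hat F=0$, integrate the Green's function by hand, read off $\gamma_n(1)=\gamma_n^\ast$ and $\partial_r\gamma_n(1)=-|n|\gamma_n^\ast+w_n^\ast/(\zeta_n+|n|-2)$, and invert the resulting lower-triangular $2\times2$ matrix $M_n$ mode by mode with the asymptotic $|\zeta_n+|n|-2|\sim 2|n|$, which is exactly the paper's computation; for part \textbf{i)} you Taylor-expand the exponent difference in the kernels $(r/s)^{\zeta_n(\mu)}$ and use the uniform margin $\mathcal{R}e\,\zeta_n > 2+2\alpha$ on $I$ to absorb the resulting logarithm, which is also the paper's method (the paper additionally splits $w_n[\mu]$ into a boundary part $W_b^n$ and two integral parts and treats each separately, and notes that $\hat\gamma[\mu]$ depends on $\hat w[\mu]$ through a $\mu$-independent map so its continuity is automatic). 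One small slip: the sign in front of the last summand in your displayed formula for $\gamma_n(r)$ should be a minus, since $\int_1^r s\,w_n(s)(s/r)^{|n|}\,\mathrm{d}s/(2|n|)=-w_n^\ast(r^{2-\zeta_n}-r^{-|n|})/(2|n|(\zeta_n-|n|-2))$; the telescoping identity you quote and your final formula for $\partial_r\gamma_n(1)$ are consistent with the corrected sign, so this is only a typo. Your Lipschitz claim $\Vert\mathcal{S}_\mu-\mathcal{S}_{\mu'}\Vert\le C|\mu-\mu'|$ is in fact stronger than the $o(1)$ the paper records, but it does hold here because of the strict gap built into the choice \eqref{eq_alpha}.
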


We postpone the proof of this technical lemma to the next section. We now
apply Proposition \ref{prop_abstractresult} to $\Phi$, but restrict the image
to the component $\hat{x}$. This yields the following result:

\begin{theorem}
There exists a map $\Psi \colon \mu \mapsto \Psi_{\mu}$ satisfying
\[
\Psi \in C(I;C^{1}(B_{\mathcal{B}_{\kappa+3}^{0}\times \mathcal{B}_{\kappa
+3}^{0}}(0,\varepsilon_{\kappa})\ ;\  \mathcal{U}_{\alpha,\kappa+4}%
^{2}\times \mathcal{U}_{\alpha+2,\kappa+2}^{2}))\, \,,\newline%
\]
such that, for all $\mathbf{v}^{\ast}\in B_{\mathcal{B}_{\kappa+3}^{0}%
\times \mathcal{B}_{\kappa+3}^{0}}(0,\varepsilon_{\kappa})$ and $\mu \in I$,
$\Psi_{\mu}(\mathbf{v}^{\ast})$ is a $\kappa$--solution for the boundary
condition $\mathbf{v}^{\ast}$ with respect to the asymptotic angular velocity
$\mu$.
\end{theorem}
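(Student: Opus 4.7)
The plan is to apply Proposition \ref{prop_abstractresult} directly to the family $\mu \mapsto \Phi_\mu$ defined in \eqref{mapPHI}, with
\[
X := (\mathcal{U}_{\alpha,\kappa+4}^{2}\times \mathcal{U}_{\alpha+2,\kappa+2}^{2})\times(\mathcal{B}_{\kappa+4}^{0}\times \mathcal{B}_{\kappa+2}^{0})\,,\quad Y := (\mathcal{U}_{\alpha,\kappa+4}^{2}\times \mathcal{U}_{\alpha+2,\kappa+2}^{2})\times(\mathcal{B}_{\kappa+3}^{0}\times \mathcal{B}_{\kappa+3}^{0})\,,
\]
and then define $\Psi_\mu(\hat{\mathbf{v}}^*)$ as the first component of $\Phi_\mu^{-1}(0, \hat{\mathbf{v}}^*)$.

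First I would verify the three hypotheses of Proposition \ref{prop_abstractresult}. The map $\Phi_\mu$ is obtained by composing the linear operators $\mathcal{S}_\mu$ and $\Gamma_1$ with the bilinear operator $\mathcal{NL}$ (and the identity). By Lemma \ref{lem_NL} and Lemma \ref{lem_S}, each factor is continuous between the appropriate Banach spaces; since the composition of a linear and a bilinear map is smooth, $\Phi_\mu \in C^\infty(X;Y)$ for each fixed $\mu \in I$. The continuity $\mu \mapsto \Phi_\mu$ in the operator topology on $C^1(X;Y)$ follows from part (i) of Lemma \ref{lem_Smu}, because the only $\mu$-dependent piece is $\mathcal{S}_\mu$, which enters linearly. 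The second hypothesis is immediate: $\mathcal{NL}(0,0)=0$ and $\mathcal{S}_\mu$ is linear, so $\Phi_\mu(0,0)=(0,0)$ for every $\mu \in I$.

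The key step is checking bijectivity of the differential. Since the nonlinear term vanishes to second order at the origin, a direct computation gives
\[
D\Phi_\mu(0)[\hat{h},\hat{h}^*] = \bigl(\mathcal{S}_\mu(0,\hat{h}^*) - \hat{h},\; \mathcal{T}_\mu(\hat{h}^*)\bigr)\,,
\]
where $\mathcal{T}_\mu = \Gamma_1 \circ \mathcal{S}_\mu(0,\cdot)$. To solve $D\Phi_\mu(0)[\hat{h},\hat{h}^*] = (y,y')$ one first inverts the second equation: by Lemma \ref{lem_Smu}(ii), $\mathcal{T}_\mu$ is a topological isomorphism from $\mathcal{B}_{\kappa+4}^0 \times \mathcal{B}_{\kappa+2}^0$ onto $\mathcal{B}_{\kappa+3}^0 \times \mathcal{B}_{\kappa+3}^0$, hence $\hat{h}^* = \mathcal{T}_\mu^{-1}(y')$ is uniquely determined, and then the first equation yields $\hat{h} = \mathcal{S}_\mu(0,\hat{h}^*) - y$. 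Continuity of the inverse follows from the continuity of $\mathcal{T}_\mu^{-1}$ and $\mathcal{S}_\mu$, and all three conditions of Proposition \ref{prop_abstractresult} are then met.

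Applying that proposition, we obtain constants $\eta_x, \eta_y > 0$ and a family of $C^1$-inverses $\Phi_\mu^{-1} \in C(I;C^1(B_Y(0,\eta_y);B_X(0,\eta_x)))$. Setting $\varepsilon_\kappa := \eta_y$ and $\Psi_\mu(\hat{\mathbf{v}}^*) := \pi_1 \circ \Phi_\mu^{-1}(0,\hat{\mathbf{v}}^*)$, where $\pi_1$ is the continuous linear projection onto the first factor of $X$, one checks that the regularity statement for $\Psi$ follows from that of $\Phi^{-1}$ together with continuity of $\pi_1$ and of $\hat{\mathbf{v}}^* \mapsto (0,\hat{\mathbf{v}}^*)$. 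By definition of $\Phi_\mu$, the equation $\Phi_\mu(\hat{x},\hat{x}^*) = (0,\hat{\mathbf{v}}^*)$ is precisely the statement that $\hat{x}$ is a $\kappa$-solution for the boundary data $\hat{\mathbf{v}}^*$ at asymptotic angular velocity $\mu$, so $\Psi_\mu(\hat{\mathbf{v}}^*)$ has the required property. The main technical obstacle is really packaged into Lemma \ref{lem_Smu}(ii) — the invertibility of $\mathcal{T}_\mu$ on the specified weighted spaces — which is what makes the linearized trace operator a bijection and turns $D\Phi_\mu(0)$ into an isomorphism; once that lemma is in hand, the proof of the present statement reduces to an abstract inverse-function argument.
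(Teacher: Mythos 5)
Your proposal is correct and follows essentially the same route as the paper: apply Proposition \ref{prop_abstractresult} to $\Phi$ defined in \eqref{mapPHI}, using Lemma \ref{lem_NL} and Lemma \ref{lem_Smu} to verify the three hypotheses (continuity, $\Phi_\mu(0)=0$, and the fact that $D\Phi_\mu(0)$ is a linear isomorphism via the invertibility of $\mathcal{T}_\mu$), and then project $\Phi_\mu^{-1}(0,\hat{\mathbf{v}}^*)$ onto its first component to obtain $\Psi_\mu$. Your explicit computation and inversion of $D\Phi_\mu(0)$ is correct (and in fact fixes a component-ordering typo in the paper's displayed formula for the differential).
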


In a final step we {show how} to prescribe the zero mode of the solution
$\Psi_{\mu}(\mathbf{v}^{\ast})$ by using the dependence of the solution on
$\mu$. Let $\eta=(\mu_{0}-$ ${\mu_{crit}})/4,$ and consider a boundary
condition
\[
\mathbf{v}^{\ast}\in B_{\mathcal{B}_{\kappa+3}^{0}\times \mathcal{B}_{\kappa
+3}^{0}}(0,\varepsilon_{\kappa})\,.
\]
{Let $(\hat{\psi}_{\mu},\hat{w}_{\mu}):=\Psi_{\mu}(\mathbf{v}^{\ast}),$ for
all $\mu \in I$}. Using that $\Psi_{\mu}(0)=0$, and restricting the size of
$\varepsilon_{\kappa}$ if necessary, we can assume that
\[
|\partial_{r}\psi_{\mu,0}(1)|\leq \Vert \hat{\psi}_{\mu}\,;\, \mathcal{U}%
_{\alpha,\kappa}^{2}\Vert \leq \eta \,,\qquad \forall \, \mu \in I\,.
\]
{Consequently}, the map $\mu \mapsto \mu-\partial_{r}\psi_{\mu,0}(0)$, {which is
continuous from $I$ to $\mathbb{R}$, because $\mu \mapsto \Psi_{\mu
}(0,\mathbf{v}^{\ast})$ is continuous,} satisfies:
\[
\mu_{-}-\partial_{r}\psi_{\mu_{-},0}(1)\leq \mu_{-}+\eta<\mu_{0}\,,\qquad
\mu_{+}-\partial_{r}\psi_{\mu_{+},0}(0)\geq \mu_{+}-\eta>\mu_{0}\,.
\]
Hence the image of this map contains an open interval $I_{\kappa,\mu_{0}}$
containing $\mu_{0}$. This completes the proof of Theorem \ref{thm_main_tech}.

\section{Proof of main lemmas}

\label{sec_technical}This section contains the proof of the technical lemmas
which have been used without proof in the previous sections. First we prove
Lemma \ref{lem_NL}, which is standard. We then give  proofs of Lemmas
\ref{lem_S} and \ref{lem_Smu} which are more delicate.

\medskip

\begin{proof}
[Proof of {Lemma \ref{lem_NL}}]Let  $\hat{F} = \mathcal{NL}%
((\hat{\gamma}^{a},\hat{w}^{a}),(\hat{\gamma}^{b},\hat{w}^{b}))$, for
$(\hat{\gamma}^{i},\hat{w}^{i})\in \mathcal{U}_{\alpha,\kappa+4}^{2}%
\times \mathcal{U}_{\alpha+2,\kappa+2}^{2}$, $i=\{a,b\}$. First, we note that
for $\kappa>0$ and $n\in \mathbb{N}$, the following series converge:
\begin{equation}
\sum_{l\in \mathbb{Z}}\dfrac{1}{(1+|l|)^{\kappa+1}(1+|n-l|)^{\kappa+3}%
}\,,\qquad \sum_{k\in \mathbb{Z}}\dfrac{1}{(1+|k|)^{\kappa+3}(1+|n-k|)^{\kappa
+1}}\,. \label{series}%
\end{equation}
Consequently, we have for all $(n,k,l)\in \mathbb{Z}^{3}$:%
\[
|lw_{l}^{a}(r)\partial_{r}\gamma_{n-l}^{b}(r)|\leq \dfrac{1}{r^{3+2\alpha}}%
\dfrac{\Vert(w_{n}^{a})_{n\in \mathbb{Z}}\ ;\  \mathcal{B}_{\alpha+2,\kappa
+2}\Vert \, \Vert(\partial_{r}\gamma_{n}^{b})_{n\in \mathbb{Z}}\ ;\  \mathcal{B}%
_{\alpha+1,\kappa+3}\Vert}{(1+|l|)^{\kappa+1}(1+|n-l|)^{\kappa+3}}\,,
\]
and
\[
|k\gamma_{k}^{a}(r)\partial_{r}w_{n-k}^{b}(r)|\leq \dfrac{1}{r^{3+2\alpha}}%
\dfrac{\Vert(\gamma_{n}^{a})_{n\in \mathbb{Z}}\ ;\  \mathcal{B}_{\alpha
,\kappa+4}\Vert \, \Vert(\partial_{r}w_{n}^{b})_{n\in \mathbb{Z}}%
\ ;\  \mathcal{B}_{\alpha+3,\kappa+1}\Vert \ }{(1+|k|)^{\kappa+3}%
(1+|n-k|)^{\kappa+1}}\,,
\]
and therefore the series defining $F_{n}$ is converging. We now bound the
series (\ref{series}). By symmetry, it is sufficient to consider only the
first series and $n\geq0$. We split the sum into
two parts:%
\begin{align*}
S_{n/2}^{-}  &  ={\sum_{l=-\infty}^{\left[  n/2\right]  }}\dfrac
{1}{(1+|l|)^{\kappa+1}(1+|n-l|)^{\kappa+3}}\\
&  \leq C_{\kappa}\left(  \dfrac{1}{1+|n|}\right)  ^{\kappa+3}{\sum
_{l\in \mathbb{Z}}}\dfrac{1}{(1+|l|)^{\kappa+1}}  \leq C_{\kappa}\left(  \dfrac{1}{1+|n|}\right)  ^{\kappa+3}\,,
\end{align*}
and%
\begin{align*}
S_{n/2}^{+}  &  ={\sum_{\left[  n/2\right]  +1}^{\infty}}\dfrac{1}%
{(1+|l|)^{\kappa+1}(1+|n-l|)^{{\kappa+3}}}\\
&  \leq C_{\kappa}\left(  \dfrac{1}{1+|n|}\right)  ^{\kappa+1}\sum_{l\in\mathbb Z}\dfrac{1}{(1+|l|)^{\kappa+3}}  \leq C_{\kappa}\left(  \dfrac{1}{1+|n|}\right)  ^{\kappa+1}\,.
\end{align*}
This shows that $ \hat{F}\in \mathcal{B}%
_{4+2\alpha,\kappa+1}$.
\end{proof}

\medskip

\subsection{Proof of Lemma \ref{lem_S}}
From now on, we assume $\kappa >0.$ 
We recall that:
\[
\zeta_{n}:=\left[  n^{2}+i\mu n\right]  ^{\frac{1}{2}}\,,\quad \forall
\,n\in \mathbb{Z}\,.
\]
In what follows, we use without mention the following properties of $\zeta
_{n}$:
\[
|\zeta_{n}|=|n|\left(  1+\left(  \frac{\mu}{n}\right)  ^{2}\right)  ^{\frac
{1}{4}}\,,\quad \forall \,n\in \mathbb{Z}\setminus \{0\} \,,
\]
and%
\begin{equation}%
\begin{array}
[c]{rcl}%
\xi_{n}:=\mathcal{R}e(\zeta_{n}) & = & \dfrac{|n|}{\sqrt{2}}\left[  \left(
1+\left(  \dfrac{\mu}{n}\right)  ^{2}\right)  ^{\frac{1}{2}}+1\right]
^{\frac{1}{2}}\,,\\
\mathcal{I}m(\zeta_{n}) & = & \dfrac{n}{\sqrt{2}}\left[  \left(  1+\left(
\dfrac{\mu}{n}\right)  ^{2}\right)  ^{\frac{1}{2}}-1\right]  ^{\frac{1}{2}}\,,
\end{array}
\quad \forall \,n\in \mathbb{Z}\setminus \{0\} \,. \label{def_win}%
\end{equation}
We note that $\xi_{n}$ is an increasing function of $|n|$ so that its minimal
value (over $n\in \mathbb{Z}\setminus \{0\}$) is reached for $n=\pm1$ and is
equal to:
\[
\rho_{\mu}:=\dfrac{1}{\sqrt{2}}\left[  \left(  1+\mu^{2}\right)  ^{\frac{1}%
{2}}+1\right]  ^{\frac{1}{2}}\,.
\]
Let%
\[
\alpha_{\mu}:= \dfrac{1}{2}\min(\rho_{\mu}-2,1)\,.
\]
For $\mu>{\mu_{crit}}$ we have $\rho_{\mu}>2$, so that $\alpha_{\mu}>0$. We
choose $\alpha \in(0,\alpha_{\mu})$ from now on. This is the smallness condition 
that is mentioned in Lemma \ref{lem_S}.

\medskip

With the above conventions, we first analyze the equations which determine
$\hat{w}$:

\begin{proposition}
\label{prop_S_w} Given $\hat{F}\in \mathcal{B}_{2\alpha+4,\kappa}$ and
$\hat{w}^{\ast}\in \mathcal{B}_{\kappa+2}^{0}$, the equations:
\begin{align}
w_{n}(r)  &  =\dfrac{\overline{w}_{n}}{r^{\zeta_{n}}}+\int_{r}^{\infty}%
\dfrac{sF_{n}(s)}{2\zeta_{n}}\left(  \dfrac{r}{s}\right)  ^{\zeta_{n}%
}\text{$\, \mathrm{d}s$}+\int_{1}^{r}\dfrac{sF_{n}(s)}{2\zeta_{n}}\left(
\dfrac{s}{r}\right)  ^{\zeta_{n}}\text{$\, \mathrm{d}s$}\,, \label{prf_gamman}%
\\[6pt]
w_{0}(r)  &  =-\int_{r}^{\infty}\dfrac{1}{s}\int_{s}^{\infty}t\,F_{0}%
(t)\text{$\, \mathrm{d}t\, \mathrm{d}s$}\,,
\end{align}
with:
\[
\overline{w}_{n}=w_{n}^{\ast}-\int_{1}^{\infty}\dfrac{sF_{n}(s)}{2\zeta_{n}%
}\left(  \dfrac{1}{s}\right)  ^{\zeta_{n}}\text{$\, \mathrm{d}s\,,$}%
\]
define $C^{2}$ functions. Moreover, we have $\hat{w}\in \mathcal{U}%
_{\alpha+2,\kappa+2}^{2}$, and there exists a constant $C_{\alpha,\mu
}<\infty$, depending only on $\alpha$ and $\mu$, such that
\begin{equation}
\Vert(w_{n})_{n\in \mathbb{Z}}\ ;\  \mathcal{U}_{\alpha+2,\kappa+2}^{2}\Vert \leq
C_{\alpha,\mu}\, \left[  \Vert(F_{n})_{n\in \mathbb{Z}}\ ;\  \mathcal{B}_{4+2\alpha,\kappa
+2}\Vert+\Vert(w_{n}^{\ast})_{n\in \mathbb{Z}}\ ;\  \mathcal{B}_{\kappa+2}%
^{0}\Vert \right]  \,. \label{regw}%
\end{equation}

\end{proposition}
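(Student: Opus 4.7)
The plan is to estimate each of the three pieces in the explicit formula for $w_n$ (and the double integral formula for $w_0$) pointwise in $r$ and uniformly in $n$, exploiting two structural features of $\zeta_n$. First, the choice $\alpha<\alpha_\mu$ forces $\xi_n=\mathrm{Re}(\zeta_n)\geq\rho_\mu>2+2\alpha$ for every $n\neq 0$, so that the kernels $(r/s)^{\zeta_n}$ and $(s/r)^{\zeta_n}$ produce integrable factors with $n$-uniform constants. Second, the prefactors $1/\zeta_n$ combined with $|\zeta_n|\sim|n|$ at infinity upgrade the input $|n|$-weight by two powers, carrying $\mathcal{B}_{4+2\alpha,\kappa}$ into $\mathcal{B}_{\alpha+2,\kappa+2}$ at the level of $w_n$ itself.

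For the concrete estimates on $n\neq 0$ I would set $M:=\|\hat F\,;\,\mathcal{B}_{4+2\alpha,\kappa}\|$ and use $|(r/s)^{\zeta_n}|=(r/s)^{\xi_n}$ to get
\[
\left|\int_r^\infty\frac{sF_n(s)}{2\zeta_n}\left(\frac{r}{s}\right)^{\zeta_n}ds\right|\leq\frac{M}{2|\zeta_n|(1+|n|)^{\kappa}}\cdot\frac{r^{-2-2\alpha}}{2+2\alpha+\xi_n}.
\]
The companion integral reduces to $r^{-\xi_n}\int_1^r s^{\xi_n-3-2\alpha}ds$ and yields the same bound with $2+2\alpha+\xi_n$ replaced by $\xi_n-2-2\alpha\geq\rho_\mu-2-2\alpha>0$, where the smallness of $\alpha$ is essential. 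Since $|\zeta_n|\geq|n|$ and both denominators are $\sim|n|$ for large $|n|$, each contribution is $O(M(1+|n|)^{-\kappa-2}r^{-2-2\alpha})$. The boundary term $\overline w_n/r^{\zeta_n}$ is controlled by $|\overline w_n|\leq CM(1+|n|)^{-\kappa-2}+|w_n^\ast|$ together with $r^{-\xi_n}\leq r^{-2-2\alpha}$. The $n=0$ case is direct: the inner integral is $O(Ms^{-2-2\alpha})$ and the outer one produces $|w_0(r)|\leq CMr^{-2-2\alpha}$.

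For the first derivative I would differentiate the formula and observe that the Leibniz boundary contributions $\pm rF_n(r)/(2\zeta_n)$ cancel, leaving
\[
\partial_r w_n(r)=-\frac{\zeta_n\overline w_n}{r^{\zeta_n+1}}+\frac{\zeta_n}{r}\left[\int_r^\infty\frac{sF_n(s)}{2\zeta_n}\left(\frac{r}{s}\right)^{\zeta_n}ds-\int_1^r\frac{sF_n(s)}{2\zeta_n}\left(\frac{s}{r}\right)^{\zeta_n}ds\right].
\]
Each term is the previous bound times $|\zeta_n|/r$, and the elementary inequality $|\zeta_n|\leq C(1+|n|)$ downgrades the $n$-weight by one and improves the $r$-weight by one, giving $\partial_r w_n\in\mathcal{B}_{\alpha+3,\kappa+1}$. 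Rather than differentiate the integrals again for $\partial_{rr}w_n$, I would invoke the ODE itself,
\[
\partial_{rr}w_n=F_n-\frac{1}{r}\partial_r w_n+\frac{i\mu n+n^2}{r^2}w_n,
\]
each term of which is in $\mathcal{B}_{\alpha+4,\kappa}$ by the preceding estimates and the hypothesis on $F$; the $n=0$ derivatives are simpler and handled by straight differentiation.

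The main technical obstacle is keeping the correct $|n|$-weight at every step: the two-unit gain in $\kappa$ for $w_n$ is the product of the prefactor $|\zeta_n|^{-1}\sim|n|^{-1}$ and one of the denominators $2+2\alpha+\xi_n$ or $\xi_n-2-2\alpha$, each of which is both bounded below uniformly in $n$ (here the constraint $\alpha<\alpha_\mu$ is indispensable) and grows like $|n|$ for large $n$. Continuity in $r$ of $w_n$ and of its first two derivatives follows by dominated convergence once the pointwise bounds are established, and taking the supremum in $n$ and $r$ in the corresponding weighted norms yields \eqref{regw} with a constant $C_{\alpha,\mu}$ depending on $\mu$ only through $\rho_\mu$ and $\alpha_\mu$.
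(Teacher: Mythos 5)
Your proposal is correct and follows essentially the same route as the paper: the same three-term decomposition of $w_n$, the same pointwise integral estimates exploiting $\xi_n\geq\rho_\mu>2+2\alpha$ (which is exactly what $\alpha<\alpha_\mu$ guarantees), the same cancellation of the Leibniz boundary terms for $\partial_r w_n$, and the same trick of using the ODE itself to bound $\partial_{rr}w_n$ instead of differentiating the integrals a second time. The only cosmetic difference is that you spell out the denominators $\xi_n+2+2\alpha$ and $\xi_n-2-2\alpha$ explicitly, whereas the paper absorbs them into the constant $C_{\alpha,\mu}$; the mechanism for the two-power gain in the $n$-weight (one power from $|\zeta_n|^{-1}$, one from the $\xi_n$-dependent denominator) is identified correctly in both.
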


\begin{proof}
We only prove \eqref{regw}; existence and continuity follow in a
straightforward way. We first treat the case $n\neq0$ and then the case $n=0$.
Throughout the proof, we use the shorthand $M_{F}$ for $\Vert(F_{n}%
)_{n\in \mathbb{Z}}\ ;\  \mathcal{B}_{4+2\alpha,\kappa}\Vert$.

\medskip

\textbf{Case $n\neq0$.} We split the
expression defining $w_{n}$ according to (\ref{prf_gamman}):
\[
w_{n}(r)=\dfrac{\overline{w}_{n}}{r^{\zeta_{n}}}+I_{1}^{n}(r)+I_{\infty}%
^{n}(r)\,.
\]
By definition of the norm on the space $\mathcal{B}_{4+2\alpha,\kappa}$, we
have:
\[
|F_{n}(s)|\leq \dfrac{M_{F}}{(1+|n|)^{\kappa}\,s^{4+2\alpha}}\,,\qquad
\forall \,s\geq1\,.
\]
Using that $\xi_{n}\geq \rho_{\mu}>\alpha+2$, and that $c |n| \leq  |\zeta_{n}| \leq C(1+\sqrt{|\mu|})|n|$ and
$|n| \leq \xi_{n} \leq (1+\sqrt{|\mu|})|n|$ for large values of $|n|$, we get that:%
\begin{align}
|I_{1}^{n}(r)|  &  =\left \vert {\int_{1}^{r}}\dfrac{sF_{n}(s)}{2\zeta_{n}%
}\left(  \dfrac{s}{r}\right)  ^{\zeta_{n}}\text{$\, \mathrm{d}s$}\right \vert
\nonumber \\[10pt]
\  &  \leq \dfrac{M_{F}}{(1+|n|)^{\kappa}|\zeta_{n}|}\  \dfrac{1}{r^{\xi_{n}}%
}\ {{\int_{1}^{r}}}s^{1+\xi_{n}-(4+2\alpha)}\text{$\, \mathrm{d}s$%
}\nonumber \\[10pt]
\  &  \leq C_{\alpha,{\mu}}\, \dfrac{M_{F}}{(1+|n|)^{\kappa+2}}\dfrac{\left(
1+r^{\xi_{n}-(2\alpha+2)}\right)  }{r^{\xi_{n}}}\,, \label{In1}%
\end{align}
for all $r\geq1$. Here we used that, by our smallness condition on $\alpha,$ we have $\xi_n - (2\alpha+2) >-1$.
We also have:
\begin{align}
|I_{\infty}^{n}(r)|  &  =\left \vert {\int_{r}^{\infty}}\dfrac{sF_{n}%
(s)}{2\zeta_{n}}\left(  \dfrac{r}{s}\right)  ^{\zeta_{n}}\text{$\,
\mathrm{d}s$}\right \vert \nonumber \\[10pt]
&  \leq \dfrac{M_{F}}{(1+|n|)^{\kappa}|\zeta_{n}|}\ r^{\xi_{n}}\ {{\int
_{r}^{\infty}}}s^{1-\xi_{n}-(2\alpha+4)}\text{$\, \mathrm{d}s$}%
\nonumber \\[10pt]
&  \leq C_{\alpha,{\mu}}\, \dfrac{M_{F}}{(1+|n|)^{\kappa+2}}\dfrac{1}{r^{2\alpha+2}%
}\,, \label{Ininf}%
\end{align}
for all $r\geq1.$  
Using these bounds for $r=1$, we obtain:%
\begin{equation}
|\overline{w}_{n}|\leq|w_{n}^{\ast}|+C_{\alpha,{\mu}}\, \dfrac{M_{F}}{(1+|n|)^{\kappa+2}%
}\,. \label{wnbar}%
\end{equation}
Plugging (\ref{In1}), (\ref{Ininf}) and (\ref{wnbar}) into (\ref{prf_gamman})
and recalling that $\xi_{n}>\alpha+2$ yields:
\[
|w_{n}(r)|\leq C_{\alpha,\mu}\, \dfrac{M_{F}+\Vert \hat{w}^{\ast}\,;\,
\mathcal{B}_{\kappa+2}^{0}\Vert}{(1+|n|)^{\kappa+2}}\dfrac{1}{r^{\alpha+2}%
}\,,\qquad \forall \,r\geq1\,.
\]
Differentiating (\ref{prf_gamman}) with respect to $r$, we obtain:
\[
\partial_{r}w_{n}(r)=-\dfrac{\zeta_{n}\overline{w}_{n}}{r^{\zeta_{n}+1}}%
+\frac{\zeta_{n}}{r}I_{1}^{n}(r)-\frac{\zeta_{n}}{r}I_{\infty}^{n}%
(r)\,,\quad \forall \,r\geq1\,.
\]
To summarize, when we differentiate $w_n$ with respect to
$r$, the decay in $r$ increases by one power, and the decay in $n$ decreases
by one power. This observation allows us to bound $\partial_{r}w_{n}$ in
the indicated function spaces. Finally, since the expression defining $\hat
{w}$ define a solution of \eqref{Fourier}, we plug the bounds on $w_{n}$
and $\partial_{r}w_{n}$ into this equation and get a bound for $\partial
_{rr}w_{n}(r)$. We obtain  that there exists a constant
$C_{\alpha,\mu}$, depending only on $\alpha$ and $\mu$, such that%
\[
\dfrac{r^{2}|\partial_{rr}w_{n}(r)|}{(1+|n|)^{2}}+\dfrac{r|\partial_{r}%
w_{n}(r)|}{(1+|n|)}+|w_{n}(r)|\leq C_{\alpha,{\mu}}\, \dfrac{M_{F}+\Vert
\hat{w}^{\ast}\,;\, \mathcal{B}_{\kappa+2}^{0}\Vert}{(1+|n|)^{\kappa+2}}%
\dfrac{1}{r^{\alpha+2}}\,,\qquad \forall \,r\geq1\,.
\]
We emphasize here that the constant $C_{\alpha,\mu}$ depends on $\alpha$ and $\mu.$
Nevertheless, it is clear from the computations above that, when $\alpha$ is fixed 
and $\mu$ varies in a a compact interval $I \subset \mathbb R,$    this constant
remains uniformly bounded. 

\medskip

\textbf{Case $n=0$.} Proceeding as in the case $n\neq0$, we get the 
bound:%
\[
|w_{0}(r)|\leq M_{F}\int_{r}^{\infty}\dfrac{1}{s}\int_{s}^{\infty}\dfrac
{1}{t^{4+2\alpha}}\, \text{$\, \mathrm{d}t\, \mathrm{d}s$}\leq C_{{\alpha}%
}\dfrac{M_{F}}{r^{2+\alpha}}\,,\quad \forall \,r\geq1\,.
\]
Similarly, one shows
\[
|\partial_{r}w_{0}(r)|\leq \dfrac{M_{F}}{r}\int_{r}^{\infty}\dfrac
{1}{s^{4+2\alpha}}\text{$\, \mathrm{d}s\,$}\leq C_{{\alpha}}\dfrac{M_{F}%
}{r^{3+\alpha}}\,,\quad \forall \,r\geq1\,,
\]
and we again conclude, by recalling the differential
equation satisfied by $w_{0}$ (see \eqref{Fourier} for $n=0$), that:
\[
{r^{2}|\partial_{rr}w_{0}(r)|}+{r|\partial_{r}w_{0}(r)|}+|w_{0}(r)|\leq
C_{\alpha}\, \dfrac{M_{F}+\Vert \hat{w}^{\ast}\,;\, \mathcal{B}_{\kappa+2}%
^{0}\Vert}{r^{2+\alpha}}\, \,,\qquad \forall \,r\geq1\,.
\]
This completes the proof.
\end{proof}

\medskip

We next consider the equation satisfied by $\hat{\gamma}$:

\begin{proposition}
\label{lem_S_gamma} Given $\hat{\phi}\in \mathcal{B}_{\alpha+2,\kappa+2}$ and
$\hat{\gamma}^{\ast}\in \mathcal{B}_{\kappa+4}^{0}$, the equations:
\begin{align}
\gamma_{n}(r)  &  =\dfrac{\overline{\gamma}_{n}}{r^{|n|}}-\int_{r}^{\infty
}\dfrac{s\phi_{n}(s)}{2|n|}\left(  \dfrac{r}{s}\right)  ^{|n|}\text{$\,
\mathrm{d}s$}-\int_{1}^{r}\dfrac{s\phi_{n}(s)}{2|n|}\left(  \dfrac{s}%
{r}\right)  ^{|n|}\text{$\, \mathrm{d}s$}\,,\\[10pt]
\gamma_{0}(r)  &  =\int_{r}^{\infty}\dfrac{1}{s}\int_{s}^{\infty}t\, \phi
_{0}(t)\text{$\, \mathrm{d}t\, \mathrm{d}s$}\,,
\end{align}
with
\[
\overline{\gamma}_{n}=\gamma_{n}^{\ast} + \int_{1}^{\infty}\dfrac{s\phi_{n}%
(s)}{2|n|}\left(  \dfrac{1}{s}\right)  ^{|n|}\text{$\, \mathrm{d}s$}\,,
\]
define $C^{2}$ functions. Moreover, $\hat{\gamma}\in \mathcal{U}_{\alpha
,\kappa+4}^{2}$ and there exists a constant $C_{\alpha}<\infty$,
depending only on $\alpha$, such that
\begin{equation}
\Vert(\gamma_{n})_{n\in \mathbb{Z}}\ ;\  \mathcal{U}_{\alpha,\kappa+4}^{2}%
\Vert \leq C_{\alpha}\left[  \Vert(\phi_{n})_{n\in \mathbb{Z}}%
\ ;\  \mathcal{B}_{\alpha+2,\kappa+2}\Vert+\Vert(\gamma_{n}^{\ast}%
)_{n\in \mathbb{Z}}\ ;\  \mathcal{B}_{\kappa+4}^{0}\Vert \right]  \,.
\label{gammareg}%
\end{equation}

\end{proposition}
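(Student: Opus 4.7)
My plan is to mirror the proof of Proposition \ref{prop_S_w}, with the simplification that the Green's function exponents $\pm\zeta_n$ are replaced by the real numbers $\pm|n|$; no smallness condition on $\alpha$ will be needed and the constant will depend only on $\alpha$ (not on $\mu$, which is absent from this equation). As before, I would treat the cases $n\neq 0$ and $n=0$ separately; in each case I first bound $\gamma_n$ from the explicit formula, then $\partial_r\gamma_n$ by differentiating under the integral sign, and finally $\partial_{rr}\gamma_n$ by rearranging the ODE in (\ref{Fourier}).

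For $n\neq 0$, I would write $\gamma_n=\overline{\gamma}_n r^{-|n|}-J_\infty^n-J_1^n$, insert the pointwise bound $|\phi_n(s)|\leq M_\phi(1+|n|)^{-(\kappa+2)}s^{-(\alpha+2)}$ with $M_\phi:=\Vert\hat{\phi};\mathcal{B}_{\alpha+2,\kappa+2}\Vert$, and evaluate the resulting elementary integrals. Since $|n|\geq 1>\alpha$ (recall $\alpha\leq 1/4$), the integrals $\int_r^\infty s^{-1-|n|-\alpha}\,ds$ and $\int_1^r s^{|n|-1-\alpha}\,ds$ are controlled by constants times $r^{\mp|n|-\alpha}/(|n|\pm\alpha)$, so the singular powers of $r$ cancel when multiplied by the factors in the Green's function, yielding $|J_\infty^n(r)|+|J_1^n(r)|\leq C_\alpha M_\phi(1+|n|)^{-(\kappa+4)}r^{-\alpha}$. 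Setting $r=1$ gives the same bound for $\overline{\gamma}_n-\gamma_n^\ast$, and summing the pieces produces the desired $\mathcal{B}_{\alpha,\kappa+4}$ estimate on $\gamma_n$. For $n=0$, I evaluate the double integral directly using $|\phi_0(s)|\leq M_\phi s^{-(\alpha+2)}$, producing first $\int_s^\infty t\phi_0(t)\,dt = O(s^{-\alpha})$ and then $|\gamma_0(r)|\leq C_\alpha M_\phi r^{-\alpha}$.

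For the first derivative, I differentiate the explicit formulas; the two boundary contributions at $s=r$ cancel exactly, leaving $\partial_r\gamma_n=-|n|\overline{\gamma}_n r^{-|n|-1}-(|n|/r)(J_\infty^n-J_1^n)$ for $n\neq 0$, and an analogous single-integral expression for $n=0$. The extra factor $|n|/r$ upgrades the decay in $r$ by one power and reduces the weight in $n$ by one power, landing in $\mathcal{B}_{\alpha+1,\kappa+3}$ as demanded by the $\mathcal{U}_{\alpha,\kappa+4}^2$ norm. The second derivative $\partial_{rr}\gamma_n$ is then read off from the ODE $\partial_{rr}\gamma_n=-\phi_n-r^{-1}\partial_r\gamma_n+n^2 r^{-2}\gamma_n$, using the already-established estimates on the right-hand side and the hypothesis on $\phi_n$.

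The only piece of bookkeeping that requires attention is the gain of \emph{two} powers of $|n|^{-1}$ needed to pass from the weight $\kappa+2$ of $\hat{\phi}$ to the weight $\kappa+4$ of $\hat{\gamma}$: one comes from the explicit $1/(2|n|)$ in the Green's function, the other from the integration constant $1/(|n|\pm\alpha)$. Since $|n|$ is real and $\geq 1$, the condition $|n|>\alpha$ is automatic for $\alpha\leq 1/4$, which is why no analogue of the smallness assumption required in Lemma \ref{lem_S} for the vorticity equation is needed here. The constant $C_\alpha$ therefore depends only on $\alpha$ through factors like $1/(1-\alpha)$, and in particular remains bounded as $\mu$ varies over the compact interval $I$.
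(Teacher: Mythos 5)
Your proposal is correct and takes exactly the route the paper intends: the paper simply states that the proof "is identical to the proof of Proposition \ref{prop_S_w} and is left to the reader," and your write-up fills in precisely that argument, correctly observing that replacing $\zeta_n$ by $|n|$ makes the integrability condition automatic (since $|n|\geq 1>\alpha$) so no smallness restriction on $\alpha$ is needed and the constant is $\mu$-independent.
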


The proof is identical to the proof of Proposition \ref{prop_S_w} and is left
to the reader. Lemma \ref{lem_S} is a straightforward consequence of
Proposition \ref{lem_S_gamma} and Proposition \ref{prop_S_w}.

\subsection{Proof of Lemma \ref{lem_Smu}, first item}

Let $I=[\mu_{-},\mu_{+}] \subset (\mu_{crit},\infty)$ and $\alpha$ be given by \eqref{eq_alpha}. 
In particular, we have $\alpha < \min \{ \alpha_{\mu},\mu \in
I\}$ so that, applying the results of the preceding section, it follows that $\mathcal{S}_{\mu}\colon \mathcal{B}_{4+2\alpha,\kappa}%
\times(\mathcal{B}_{\kappa+4}^{0}\times \mathcal{B}_{\kappa+2}^{0}%
)\longrightarrow \mathcal{U}_{\alpha,\kappa+4}^{2}\times \mathcal{U}%
_{\alpha+2,\kappa+2}^{2}$ is a well-defined continuous linear map, for all values of
$\mu \in I$. We now show that the map $\mathcal{S}\colon \mu \mapsto
\mathcal{S}_{\mu}$ is also continuous. This amounts to show that, for arbitrary
$\mu_{0}\in I$ and $\mu \in I,$ there exists a constant $C_{\mu}$ which
converges to zero as $\mu$ converges to $\mu_{0}$, such that, for arbitrary
$(\hat{F},\hat{x}^{\ast})\in \mathcal{B}_{4+2\alpha,\kappa}\times
(\mathcal{B}_{\kappa+4}^{0}\times \mathcal{B}_{\kappa+2}^{0})$:
\[
\Vert \mathcal{S}_{\mu_{0}}(\hat{F},\hat{x}^{\ast})-\mathcal{S}_{{\mu}}(\hat
{F},\hat{x}^{\ast})\,;\, \mathcal{U}_{\alpha,\kappa+4}^{2}\times
\mathcal{U}_{\alpha+2,\kappa+2}^{2}\Vert \leq C_{\mu}\Vert(\hat{F},\hat
{x}^{\ast})\,;\, \mathcal{B}_{4+2\alpha,\kappa}\times(\mathcal{B}%
_{\kappa+4}^{0}\times \mathcal{B}_{\kappa+2}^{0})\Vert \,.
\]
Given $\mu \in I$, we let $(\hat{\gamma}[\mu],\hat{w}[\mu]):=\mathcal{S}%
_{{\mu}}(\hat{F},\hat{x}^{\ast})$. Since $\hat{\gamma}[\mu]$ is obtained
from $\hat{w}[\mu]$ \textit{via} an equation which does not depend on $\mu$,
we can  apply directly Proposition \ref{prop_S_w}, yielding that, for arbitrary
$(\mu,\tilde{\mu})\in I^{2}$:
\[
\Vert \hat{\gamma}[\mu]-\hat{\gamma}[\tilde{\mu}]\,;\, \mathcal{U}_{\alpha
,\kappa+4}^{2}\Vert \leq C_{\alpha}\Vert \hat{w}[\mu]-\hat
{w}[\tilde{\mu}]\,;\, \mathcal{U}_{\alpha+2,\kappa+2}^{2}\Vert \,.
\]
Hence, it suffices to prove that $\hat{w}[\mu]$ is continuous with respect to
$\mu$, in order to obtain the continuity of $\mathcal{S}$.

To show the continuity of $\hat{w}[\mu]$, we first remark that $w_{0}[\mu]$
does not depend $\mu$, so that we only detail the case $n\neq0$. 
Let $n\neq 0,$ we split ${w}_{n}[\mu]$ into three
terms:%
\[
w_{n}[\mu](r)=W_{b}^{n}[\mu](r)+I_{1}^{n}[\mu](r)+I_{\infty}^{n}[\mu](r)\,,
\]
where:
\[
W_{b}^{n}[\mu](r)=\dfrac{\bar{w}_{n}[\mu]}{r^{\zeta_{n}[\mu]}}\,,\quad I_{1}^{n}%
[\mu](r)={\int_{1}^{r}}\dfrac{sF_{n}(s)}{2\zeta_{n}[\mu]}\left(  \dfrac{s}%
{r}\right)  ^{\zeta_{n}[\mu]}\text{$\, \mathrm{d}s$}\,,\quad I_{\infty}^{n}%
[\mu](r)=\int_{r}^{\infty}\dfrac{sF_{n}(s)}{2\zeta_{n}[\mu]}\left(  \dfrac{r}%
{s}\right)  ^{\zeta_{n}[\mu]}\text{$\, \mathrm{d}s$}\,.
\]
We recall that
\begin{align}
\partial_{r}w_{n}[\mu](r)  &  =-\dfrac{\zeta_{n}[\mu]}{r}W_{b}^{n}[\mu
](r)-\dfrac{\zeta_{n}[\mu]}{r}I_{1}^{n}[\mu](r)+\dfrac{\zeta_{n}[\mu]}{r}I_{\infty}%
^{n}[\mu](r)\,,\label{eq_derw1}\\[10pt]
\partial_{rr}w_{n}[\mu](r)  &  =F_{n}(r)-\dfrac{\partial_{r}w_{n}[\mu](r)}%
{r} +  \dfrac{(n^{2}+i\mu n)}{r^2}w_{n}[\mu](r)\,. \label{eq_derw2}%
\end{align}
Note also that $\zeta_{n}[\mu]=(n^{2}+i\mu n)^{1/2}$ is a continuous function of $\mu$
uniformly in $n$. Indeed, since the square root is analytic in a neighborhood
of $1$, we have for sufficiently large $n$ (uniformly in $\mu \in I$):
\[
|\zeta_{n}[\mu]-\zeta_{n}[\tilde{\mu}]|=|n|\left \vert \left(  1+\dfrac{i\mu
}{n}\right)  ^{\frac{1}{2}}-\left(  1+\dfrac{i\tilde{\mu}}{n}\right)
^{\frac{1}{2}}\right \vert \leq C|\mu-\tilde{\mu}|\,.
\]
We also have the bound $|\zeta_{n}[\mu]|\leq c|n|,$ with $c$ independent of
$\mu \in I$. Introducing these bounds into \eqref{eq_derw1}-\eqref{eq_derw2}
shows that the continuity of $\hat{w}[\mu]$ follows from the
continuity of $(\hat{W}_{b}[\mu],\hat{I}_{1}[\mu],\hat
{I}_{\infty}[\mu])$ in $\mathcal{B}_{\alpha+2,\kappa+2}$.  For consistency, the three sequences,
which are only defined for $n\neq0,$ are completed by $0$ for $n=0$.

To begin with, we consider the continuity of $\mu \mapsto \hat{I}_{1}[\mu]$. Let
$(\mu,\tilde{\mu})\in I^{2}$, and assume that $|\zeta_{n}[\mu]-\zeta
_{n}[\tilde{\mu}]|<\alpha/2$, uniformly in $n$. We have:
\[
I_{1}^{n}[\mu]-I_{1}^{n}[\tilde{\mu}]=J_{1}(r)+J_{2}(r)\,,
\]
where:
\begin{align*}
J_{1}(r)  &  =\int_{1}^{r}\dfrac{sF_{n}(s)}{2\zeta_{n}[\mu]}\left[
\dfrac{\zeta_{n}[\tilde{\mu}]-\zeta_{n}[\mu]}{\zeta_{n}[\tilde{\mu}]}\right]
\left(  \dfrac{s}{r}\right)  ^{\zeta_{n}[\mu]}\, \mathrm{d}\text{$s$%
}\,,\\[6pt]
J_{2}(r)  &  =\int_{1}^{r}\dfrac{sF_{n}(s)}{2\zeta_{n}[\tilde{\mu}]}\left[
1-\left(  \dfrac{s}{r}\right)  ^{\zeta_{n}[\mu]-\zeta_{n}[\tilde{\mu}%
]}\right]  \left(  \dfrac{s}{r}\right)  ^{\zeta_{n}[\tilde{\mu}]}\,
\mathrm{d}\text{$s$}\,.
\end{align*}
We have, uniformly in $n$:
\begin{align*}
\left \vert \dfrac{\zeta_{n}[\tilde{\mu}]-\zeta_{n}[\mu]}{\zeta_{n}[\tilde{\mu
}]}\right \vert  &  \leq c|\zeta_{n}[\mu]-\zeta_{n}[\tilde{\mu}%
]|=o(1)\,,\\[10pt]
\left \vert 1-\left(  \dfrac{s}{r}\right)  ^{\zeta_{n}[\mu]-\zeta_{n}%
[\tilde{\mu}]}\right \vert  &  \leq c\left \vert \zeta_{n}[\mu]-\zeta_{n}%
[\tilde{\mu}]\right \vert \ln(r)r^{|\zeta_{n}[\mu]-\zeta_{n}[\tilde{\mu}%
]|}=o(1)\ln(r)r^{\alpha/2}\,.
\end{align*}
We introduce these uniform bounds in $J_1$ and $J_2$ and redo the computations in the
proof of {Proposition \ref{prop_S_w}} (see \eqref{In1}). We get:
\[
|J_{1}(r)|\leq \dfrac{\Vert \hat{F};\mathcal{B}_{4+2\alpha}\Vert
}{(1+|n|^{\kappa+2})}\  \dfrac{o(1)}{r^{2+2\alpha}}\,,\quad|J_{2}%
(r)|\leq \dfrac{\Vert \hat{F};\mathcal{B}_{4+2\alpha}\Vert}{(1+|n|^{\kappa
+2})}\  \dfrac{o(1)\ln(r)}{r^{2+3\alpha/2}}\,,
\]
where the term $o(1)$ denotes a constant converging to $0$ when $\mu-\tilde{\mu} \rightarrow0$,
uniformly in $n\,$. Finally, we have that, for all $n\in \mathbb{Z}\setminus \{0\}$:
\begin{equation}
|I_{1}^{n}[\mu]-I_{1}^{n}[\tilde{\mu}]|\leq \dfrac{\Vert \hat{F};\mathcal{B}%
_{4+2\alpha}\Vert}{(1+|n|^{\kappa+2})}\  \dfrac{o(1)}{r^{2+\alpha}}\,, \label{eq_diffI1}%
\end{equation}

We now prove the continuity of $\mu \mapsto \hat{I}_{\infty}[\mu]$. For any
$(\mu,\tilde{\mu})\in I$ and $n\neq0$, we perform a similar splitting:
\[
I_{\infty}^{n}[\mu]-I_{\infty}^{n}[\tilde{\mu}]=J_{1}(r)+J_{2}(r)\,,
\]
where:
\begin{align*}
J_{1}(r)  &  =\int_{r}^{\infty}\dfrac{sF_{n}(s)}{2\zeta_{n}[\mu]}\left[
\dfrac{\zeta_{n}[\tilde{\mu}]-\zeta_{n}[\mu]}{\zeta_{n}[\tilde{\mu}]}\right]
\left(  \dfrac{r}{s}\right)  ^{\zeta_{n}[\mu]}\mathrm{d}\text{$s$}\,,\\[6pt]
J_{2}(r)  &  =\int_{r}^{\infty}\dfrac{sF_{n}(s)}{2\zeta_{n}[\tilde{\mu}%
]}\left[  1-\left(  \dfrac{r}{s}\right)  ^{\zeta_{n}[\mu]-\zeta_{n}[\tilde
{\mu}]}\right]  \left(  \dfrac{r}{s}\right)  ^{\zeta_{n}[\tilde{\mu}%
]}\mathrm{d}\text{$s$}\,.
\end{align*}
As in the preceding bound we have, uniformly in $n$:
\begin{align*}
\left \vert \dfrac{\zeta_{n}[\tilde{\mu}]-\zeta_{n}[\mu]}{\zeta_{n}[\tilde{\mu
}]}\right \vert  &  \leq c|\zeta_{n}[\mu]-\zeta_{n}[\tilde{\mu}%
]|=o(1)\,,\\[10pt]
\left \vert 1-\left(  \dfrac{s}{r}\right)  ^{\zeta_{n}[\mu]-\zeta_{n}%
[\tilde{\mu}]}\right \vert  &  \leq o(1)\ln \left(  \dfrac{s}{r}\right)  \left(
\dfrac{s}{r}\right)  ^{\alpha/2}\,,
\end{align*}
where we have used that  $|\zeta_{n}[\mu]-\zeta_{n}[\tilde{\mu}%
]|\leq \alpha/2$. We can therefore redo the computations in the proof of
{Proposition \ref{prop_S_w}} (see \eqref{Ininf}). This yields, for all $n\in\mathbb Z \setminus\{0\}$:
\[
|J_{1}(r)| + |J_2(r)| \leq \dfrac{\Vert \hat{F};\mathcal{B}_{4+2\alpha,\kappa}\Vert
}{(1+|n|^{\kappa+2})}\  \dfrac{o(1)}{r^{2+\alpha}}\,.
\]
As in the preceding estimate we conclude that:
\[
\Vert \hat{I}_{\infty}[\mu]-\hat{I}_{\infty}[\tilde{\mu}];\mathcal{B}_{\alpha
_{-}+2,\kappa+2}\Vert=o(1)\Vert \hat{F};\mathcal{B}_{4+2\alpha,\kappa}\Vert \,.
\]
Finally, we prove the continuity of $\mu \mapsto \hat{W}_b[\mu]$:
\begin{eqnarray*}
\left| W^n_b[\mu](r) - W^n_b[\tilde{\mu}](r) \right| &=&  \left| \dfrac{\bar{w}_n[\mu] - \bar{w}_n[\tilde{\mu}]}{r^{\zeta_n[\mu]}}  +  \dfrac{\bar{w}_n[\tilde{\mu}]}{r^{\zeta_n[\mu_-]}} \left( \dfrac{1}{r^{\zeta_n[\mu]- \zeta_n[\mu_-]}} - \dfrac{1}{r^{\zeta_n[\tilde{\mu}]- \zeta_n[\mu_-]}}\right) \right| \,, \\[10pt]
&\leq & 
 \left| \dfrac{\bar{w}_n[\mu] - \bar{w}_n[\tilde{\mu}]}{r^{\zeta_n[\mu]}} \right| + \left| \dfrac{\bar{w}_n[\tilde{\mu}]}{r^{\zeta_n[\mu_-]}} \left( \dfrac{1}{r^{\zeta_n[\mu]- \zeta_n[\mu_-]}} - \dfrac{1}{r^{\zeta_n[\tilde{\mu}]- \zeta_n[\mu_-]}}\right) \right| \,, \\[10pt]
 &\leq &
 \dfrac{|\bar{w}_n[\mu] - \bar{w}_n[\tilde{\mu}]|}{r^{2+2\alpha}} + \dfrac{|\bar{w}_n[\tilde{\mu}]|}{r^{2+\alpha}} \dfrac{1}{r^{\alpha}} \left|\left( \dfrac{1}{r^{\zeta_n[\mu]- \zeta_n[\mu_-]}} - \dfrac{1}{r^{\zeta_n[\tilde{\mu}]- \zeta_n[\mu_-]}}\right) \right|\,,
\end{eqnarray*}
where we have used that $\mathcal{R}e(\zeta_n[\mu]) \geq \mathcal{R}e(\zeta_n[\mu_-]) > 2+ 2\alpha.$
At this point we note that  the bound which we obtained above for $I_{\infty}^n$ in $r=1$ yields: 
$$
|\bar{w}_n[\mu] - \bar{w}_n[\tilde{\mu}]| = \dfrac{o(1)}{(1+|n|)^{\kappa+2}}\Vert \hat{F};\mathcal{B}_{4+2\alpha,\kappa}\Vert\,.
$$ 
As $\mu \mapsto \zeta_n[\mu]$ is continuous in $\mu$ (uniformly in $n$) and  satisfies $\mathcal{R}e(\zeta_n[\mu])  \geq \mathcal{R}e(\zeta_n[\mu_-]),$
for all $\mu \in I,$ we also have:
$$
\left\| \dfrac{1}{r^{\alpha}} \left( \dfrac{1}{r^{\zeta_n[\mu]- \zeta_n[\mu_-]}} - \dfrac{1}{r^{\zeta_n[\tilde{\mu}]- \zeta_n[\mu_-]}}\right) \, ; \, L^{\infty}(1,\infty) \right\|  = o(1)\,,
$$
where $o(1)$ is uniform in $n.$
By combination, this yields, for all $n \in \mathbb Z \setminus \{0\}$:
$$
\left|
W^n_b[\mu](r) - W^n_b[\tilde{\mu}](r) 
\right| \leq \dfrac{o(1)}{(1+|n|^{\kappa+2}) r^{2+\alpha}} \left[ \Vert \hat{F};\mathcal{B}_{4+2\alpha,\kappa}\Vert + \Vert \hat{w}^*;\mathcal{B}^0_{\kappa+2}\Vert \right]\,.
$$
This completes the proof of the first item in Lemma \ref{lem_Smu}.

\subsection{Proof of Lemma \ref{lem_Smu}, second item}

In this paragraph, we prove that the map $\mathcal{T}_{1}$ is one-to-one and onto with a
continuous inverse. Given $\hat{x}^{\ast}=(\hat{\gamma}^{\ast},\hat{w}^{\ast
})\in \mathcal{B}_{\kappa+4}^{0}\times \mathcal{B}_{\kappa+2}^{0}$, we set
$(\hat{\gamma},\hat{w})=\mathcal{S}_{\mu}(0,\hat{x}^*)$. A straightforward
computation shows:
\[
w_{n}(r)=\dfrac{w_{n}^{\ast}}{r^{\zeta_{n}}}\qquad \gamma_{n}(r)=\dfrac
{\bar{\gamma}_{n}}{r^{|n|}} + \int_{r}^{\infty}\dfrac{sw_{n}(s)}{2|n|}\left(
\dfrac{r}{s}\right)  ^{|n|}\text{$\, \mathrm{d}s$} + \int_{1}^{r}\dfrac
{sw_{n}(s)}{2|n|}\left(  \dfrac{s}{r}\right)  ^{|n|}\text{$\, \mathrm{d}s$%
}\,,\quad \forall \,r\geq1\,,
\]
where:
\[
\bar{\gamma}_{n}=\gamma_{n}^{\ast}-\int_{1}^{\infty}\dfrac{sw_{n}(s)}%
{2|n|}\left(  \dfrac{1}{s}\right)  ^{|n|}\text{$\, \mathrm{d}s$}\,.
\]
Therefore, we have, for all $n\in \mathbb{Z}\setminus \{0\}$:
\[
\gamma_{n}(1)=\gamma_{n}^{\ast}\,,
\]
together with:%
\[
\partial_{r}\gamma_{n}(1)=-|n|\gamma_{n}^{\ast} + {\int_{1}^{\infty}}%
sw_{n}(s)\left(  \dfrac{1}{s}\right)  ^{|n|}\text{$\, \mathrm{d}s$}%
=-|n|\gamma_{n}^{\ast} - \dfrac{w_{n}^{\ast}}{2-|n|-\zeta_{n}}\,,
\]
so that $(\hat{v}_r^{\ast},\hat{v}_{\theta}^{\ast})=\mathcal{T}_{1}(\hat{x}^{\ast})$ satisfies:

\begin{itemize}
\item $v_{r,0}^{\ast}=v_{\theta,0}^{\ast}=0$,

\item $v_{r,n}^{\ast}=in\gamma_{n}^{\ast}$, and $v_{\theta,n}^{\ast}= -\partial_r \gamma_n(1) =  |n|\gamma
_{n}^{\ast} + \dfrac{w_{n}^{\ast}}{2-|n|-\zeta_{n}}$, for all $n\in
\mathbb{Z}\setminus \{0\}$.
\end{itemize}

This shows that the map $\mathcal{T}_{1}$ is  one-to-one and onto. Indeed, the inverse map
is given by:
\[
\mathcal{T}_{1}^{-1}[\hat{v}_{r}^{\prime},\hat{v}_{\theta}^{\prime})]=(\hat{\gamma}%
^{\prime},\hat{w}^{\prime})\,,
\]
where:

\begin{itemize}
\item $\gamma_{0}^{\prime}=w_{0}^{\prime}=0,$

\item $\gamma_{n}^{\prime}=\dfrac{v_{r,n}^{\prime}}{in}$, and $w_{n}^{\prime
}=  (2-|n|-\zeta_{n})\left(  v_{\theta,n}^{\prime} - \dfrac{|n|v_{r,n}^{\prime}}%
{in}\right)  $ for all $n\in \mathbb{Z}\setminus \{0\}$.
\end{itemize}
It is therefore clear that $\mathcal{T}_{1}^{-1}\in \mathcal{L}_{c}%
(\mathcal{B}_{\kappa+3}^{0}\times \mathcal{B}_{\kappa+3}^{0}\ ;\  \mathcal{B}%
_{\kappa+4}^{0}\times \mathcal{B}_{\kappa+2}^{0})$. This completes the proof.

\end{document}